\newtheorem{proposition}{Proposition}[section]
\newtheorem{theorem}[proposition]{Theorem}
\newtheorem{lemma}[proposition]{Lemma}
\newtheorem{corollary}[proposition]{Corollary}
\newtheorem{definition}[proposition]{{Definition}}
\newenvironment{defn}{\begin{definition} \rm}{\end{definition}}
\newtheorem{remark}[proposition]{{Remark}}
\newtheorem{Example}[proposition]{Example}
\newcommand{\cA}{{\mathcal A}}
\newcommand{\cB}{{\mathcal B}}
\newcommand{\cI}{{\mathcal I}}
\newcommand{\cN}{{\mathcal N}}
\newcommand{\cF}{{\mathcal F}}
\newcommand{\cP}{{\mathcal P}}
\newcommand{\cQ}{{Q}}
\newcommand{\cH}{{\mathcal H}}
\newcommand{\cT}{{\mathcal T}}
\newcommand{\cG}{{\mathcal G}}
\newcommand{\cS}{{\mathcal S}}
\newcommand{\cV}{{\mathcal V}}
\renewcommand{\dim}{\operatorname{dim}\nolimits}
\newcommand{\gldim}{\operatorname{gl.dim}\nolimits}
\newcommand{\grb}{Gr\"obner\ }
\newcommand{\tip}{\operatorname{tip}}
\newcommand{\Span}{\operatorname{Span}}
\newcommand{\Imo}{I_{Mon}}
\newcommand{\Alge}{\operatorname{Alg}\nolimits}
\newcommand{\Hilb}{\operatorname{\text{Hilb}}\nolimits}
\newcommand{\Alg}{\operatorname{Alg}\nolimits}
\definecolor{candyapplered}{rgb}{1.0, 0.03, 0.0}
\def\thm@space@setup{%
  \thm@preskip=0.7cM \thm@postskip=0.3cM
}
\begin{document}

\date{today}
\title[Algebras and varieties]
{Algebras and varieties}

\author[Green]{Edward L.\ Green}
\address{Edward L.\ Green, Department of
Mathematics\\ Virginia Tech\\ Blacksburg, VA 24061\\
USA}
\email{green@math.vt.edu}

\author[Hille]{Lutz Hille}
\address{Lutz Hille\\
Department of Mathematics \\
University of Muenster \\
}
\email{lutzhille@uni-muenster.de}

\author[Schroll]{Sibylle Schroll}
\address{Sibylle Schroll\\
Department of Mathematics \\
University of Leicester \\
University Road  \\
Leicester LE1 7RH, UK
}
\email{schroll@le.ac.uk}

\subjclass[2010]{16G20, 
14M05 
16S15 
16W50
}
\keywords{}
\thanks{The first and third author were partially supported by an LMS scheme 4 grant. The third author is supported by the EPSRC through the Early Career Fellowship EP/P016294/1}

\begin{abstract}
In this paper we introduce new affine algebraic varieties whose points correspond to  associative algebras.  We show that the algebras within a variety share many important homological properties. In particular, any two algebras in the same variety have the same dimension. The case of finite dimensional algebras as well as that of graded algebras arise as subvarieties of the varieties we define.  As an application we show that for algebras of global dimension two over the complex numbers, any algebra in the variety continuously deforms to a monomial algebra.  
\end{abstract}
\date{\today}

\maketitle

\tableofcontents

\section{Introduction}

The interplay between commutative algebra and algebraic geometry   plays a fundamental role in these areas, see for example \cite{Eisenbud}. The use of algebraic geometry in other areas of mathematics has led to important results. There are numerous examples of this, including:   in the area of  non-commutative geometry, the classification of Artin Schelter regular algebras of global dimension three and their modules \cite{AS, Artin Tate vdB1, Artin Tate vdB2};  in the context of resolving singularities using non-commutative crepant resolutions, picking some examples from the extensive literature in the area, we cite \cite{ Buchweitz Leuschke vdB, Iyama Wemyss, SvdB, vdB};  in the area of weighted projective lines, we mention, for example,  
\cite{Kussin Lenzing Meltzer1, Kussin Lenzing Meltzer2}
  and the recent field of tropical geometry, for example see \cite{Sturmfels book}.

In this paper, we establish a new connection between (non-)commutative algebras and affine algebraic varieties. 

  A powerful  way of studying non-commutative algebras is to study 
quotients of path algebras of quivers. Quotients of paths algebras encompass many naturally arising classes of algebras. For example, free associative algebras are particular cases of path algebras, and hence every finitely generated algebra is the quotient of a path algebra.
Moreover,  every finite dimensional algebra over an algebraically closed field  is Morita equivalent to a quotient of the path algebra of a quiver, where the quiver is an invariant of the algebra  up to isomorphism.

In commutative polynomial rings, monomial ideals are an important special class of ideals. In a similar fashion, ideals generated by paths (as opposed to linear combinations of paths) in a path algebra play an analogous role. In fact, an ideal in a path algebra generated by paths is called a {\it monomial ideal} and the quotient by such an ideal is called a {\it monomial algebra}.

Commutative finitely generated algebras are quotients of polynomial rings  by ideals of polynomials, where the zero loci of these polynomials give rise to the associated affine geometry. The theory of Gr\"obner bases is a central tool in the study of ideals in polynomial rings. 

In this paper, we use non-commutative \grb basis  theory to construct new  affine algebraic varieties whose points are in one-to-one correspondence with quotients of path algebras. We note that there is no assumption that the
quotients of  paths are finite dimensional and our results include both finite and infinite dimensional algebras.

The precise construction of the new affine algebraic varieties is in  Section~\ref{Construction of varieties}. The key property of such an affine algebraic variety $\cV$ is  that the following  hold
\begin{enumerate}
\item  Every point in $\cV$ corresponds to an algebra.
\item There exists a distinguished point $A_{Mon}$  in $\cV$  corresponding to a monomial algebra.
\item The characteristics of $A_{Mon}$ govern the characteristics of all the algebras corresponding to the points in $\cV$. 
\end{enumerate}

Furthermore, every quotient of a path algebra by a finitely generated ideal is in at least one such variety. Moreover, fixing a presentation and an order on the paths, every quotient of a path algebra is in a unique variety.  Surprisingly,  we are able to show in general, that the set of algebras having the same associated monomial algebra  is a variety.

 In the classical theory, loosely speaking,  one has an algebraic group acting on  algebras of a fixed dimension (vector),
and one studies the algebras modulo the group action.   Although the algebras associated to
points in one of our varieties all have the same dimension   there can be many different
varieties where the algebras  corresponding to points have the same dimension vector.  But there
seems to be no group action on our varieties.  For our varieties, the complexity of the defining ideal is directly
related to 3 factors:
\begin{enumerate}
\item the complexity of the set of parallel paths
\item a chosen order $\succ$ on paths, and 
\item the complexity of constructing a \grb basis.
\end{enumerate}

We point out that the most difficult, and probably most interesting case is the case of local algebras, since in the corresponding paths algebras all nonzero paths are parallel. In general,  one should not expect a direct connection between the theory presented
here and the classical theory.   The results in Section \ref{sec:gldim2} provides an example of how using our
results, we can obtain new insights.

 Given a variety of algebras, we describe the construction of certain subvarieties. We show that varieties corresponding to finite dimensional algebras arise as such subvarieties. We also show that varieties associated to graded algebras correspond to such subvarieties.

The general idea of this paper is to exploit the extensive knowledge of monomial algebras. Monomial algebras have been intensively studied because they are much more amenable to computations of their ideal structure and homological properties than general algebras. For example, there is an algorithmic approach to find the Cartan matrix and the necessary and sufficient conditions for the algebra to be quasi-hereditary \cite{GSch}; there is an algorithm to find the graded minimal projective resolution of a monomial algebra and hence the global dimension \cite{GHZ}; every quadratic monomial algebra is  Koszul \cite{GH}.

  The following Theorem summarises the main common properties of all algebras in a variety many of which allow to make use of the well-understood knowlegde of these properties for monomial algebras.

 \begin{theorem}\label{thm intro}
Suppose that $\cV$ is an affine algebraic variety with distinguished point $A_{Mon}$ as described in (1)-(3) above. Then 
 \begin{enumerate}
 \item All algebras in $\cV$ have the same dimension equal to the dimension of $A_{Mon}$.
 \item All finite dimensional algebras in $\cV$ have the same Cartan matrix.
 \item gldim $(\Lambda) \leq $ gldim $(A_{Mon})$, for all algebras $\Lambda$ in $\cV$.
 \item If gldim $(A_{Mon}) < \infty$  then the Cartan determinant conjecture holds for all (finite dimensional) algebras in $\cV$.
\item  If $A_{Mon}$  is Koszul  then all graded quadratic algebras in $\cV$ are Koszul. The analogous statement holds in the case of $N$-Koszul algebras, $N \geq 3 $.
 \item  If $\Lambda \in \cV$ is a positively $\mathbb Z$-graded algebra then $A_{Mon}$ is positively $\mathbb Z$-graded and the Hilbert series of $\Lambda$ and $A_{Mon}$ are equal.
 \item If $A_{Mon}$ is quasi-hereditary then all algebras in $\cV$ are quasi-hereditary.

 \end{enumerate}
 \end{theorem}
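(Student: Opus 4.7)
The plan rests on the single key fact guaranteed by the non-commutative \grb basis construction of $\cV$: all points of $\cV$ give ideals with the same tip ideal, so every algebra $\Lambda = KQ/I$ corresponding to a point of $\cV$ has the same $K$-basis of nontip paths as $A_{Mon}$. I would then organise the proof as a checklist, splitting the seven items into those that follow immediately from this shared basis and those that require an additional deformation-style argument.

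Items (1), (2) and (6) follow directly from the shared nontip basis. The total $K$-dimension of $\Lambda$ equals the cardinality of the nontip basis, giving (1). The $(i,j)$-entry of the Cartan matrix equals $\dim e_i \Lambda e_j$, which is the number of nontips from $j$ to $i$ and is therefore independent of the point of $\cV$ chosen, giving (2). For (6), a positive $\mathbb Z$-grading on $\Lambda$ forces its defining ideal to be homogeneous; tip-reduction then preserves homogeneity, so $A_{Mon}$ inherits the same grading, and since the nontip basis is common and each nontip has a fixed degree, the Hilbert series of $\Lambda$ and $A_{Mon}$ coincide.

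Items (3) and (4) require a homological deformation argument. For (3) I would start from the explicit graded minimal projective resolution of a monomial algebra of \cite{GHZ} and build a projective resolution of $\Lambda$ of length at most $\gldim(A_{Mon})$, using that the nontip paths give matching $K$-bases of $e_i\Lambda$ and $e_iA_{Mon}$ and that $I$ is a flat deformation of $I_{Mon}$ in the relevant sense. Part (4) is then immediate: finite global dimension of $A_{Mon}$ transfers to $\Lambda$ by (3), the Cartan matrices coincide by (2), and the Cartan determinant conjecture is known in the monomial case.

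Finally, (5) and (7) are structural transfer statements. For (5), the Koszul hypothesis on the quadratic monomial algebra $A_{Mon}$ gives $\Lambda$ a quadratic \grb basis, and the standard \grb--Koszul principle upgrades Koszulity of the tip algebra to Koszulity of $\Lambda$. For (7), I would lift a heredity chain of idempotents of $A_{Mon}$ to a heredity chain of $\Lambda$ and check that the induced standard filtration is well-behaved. The main obstacle, common to (3) and (7), is showing that the passage from $A_{Mon}$ to $\Lambda$ preserves more than the mere $K$-basis: it must carry projective resolutions and heredity chains through intact, and I expect this to be the most technical step of the entire argument.
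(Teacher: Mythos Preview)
Your overall architecture matches the paper's: items (1), (2), and (6) come directly from the shared nontip basis $\cN$; item (4) follows from (2), (3) and Wilson's result for monomial algebras; item (5) from the fact that a quadratic \grb basis forces Koszulity; and item (7) by lifting a heredity chain (the paper simply cites \cite{GSch}). On these points you are aligned with the paper.

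The one substantive divergence is your plan for (3). You propose to start from the \cite{GHZ} monomial resolution and ``build'' a resolution of $\Lambda$ using that $I$ is a ``flat deformation of $I_{Mon}$ in the relevant sense.'' The paper does not argue via deformation at all. Instead it invokes the \emph{order resolution} machinery of \cite{AG,GS} (developed in Section~\ref{sec-order resolution}): for any $\Lambda\in\cV_{\cT}$ and any simple $S_v$, there is an algorithmically constructed projective resolution whose $n$-th term is governed by a set $\cF^n$ whose tip set depends only on $\tip(\cF^{n-1})$, $\tip(\cF^{n-2})$, and $\tip(\cG)=\cT$. Hence the Betti numbers of the order resolution of $S_v$ over $\Lambda$ coincide with those over $A_{Mon}$; since the latter resolution is minimal by \cite{GHZ}, the bound $\gldim(\Lambda)\le\gldim(A_{Mon})$ follows. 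This is sharper and more concrete than a deformation argument: it produces an explicit resolution of each simple $\Lambda$-module with the \emph{same} Betti numbers as the minimal monomial one, without ever placing $\Lambda$ and $A_{Mon}$ in a family. Your deformation idea is plausible but would require you to construct a genuine flat one-parameter family degenerating $\Lambda$ to $A_{Mon}$ and then run an upper-semicontinuity argument for projective dimension---nontrivial in the non-commutative setting. The paper's route avoids this entirely.
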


The Cartan determinant conjecture states that the determinant of the Cartan matrix of a finite dimensional algebra of finite global dimension is equal to 1. This conjecture has been shown to hold for monomial algebras \cite{W}. 
A consequence of the above theorem is that if the Cartan determinant of a finite dimensional algebra of finite global dimension is equal to 1, then the Cartan determinant conjecture holds for all algebras  in that variety. Hence to prove the Cartan determinant conjecture one need only study algebras of finite global dimension whose associated monomial algebra is of infinite global dimension for any order.

\section{Gr\"obner bases}

In this Section and Section~\ref{sect-reduced}, we give a brief survey of non-commutative \grb basis theory for path algebras following \cite{G Intro Groebner}. 

We fix the following notation throughout the paper.  Let $K$ be any field. If $X$ is  an infinite set of vectors in a $K$-vector space, then
$\sum_{x\in X}c_xx$ with $c_x\in K$ has as an unstated assumption that all but a finite
number of $c_x$ equal $0$. 

Let $Q$ be a finite quiver, and let $\cB$ be the set of finite (directed) paths in $Q$. 
 Note that we view the vertices of $Q$ as paths of length zero and so they are elements in $\cB$. Furthermore, elements in the path algebra $KQ$ are read left to right.

We say that a nonzero element $x \in KQ$ is \emph{uniform} if there exists vertices $v, w \in Q_0$ such that $vxw=x$ and we say that $v$ is the \emph{origin vertex} of $x$,  denoted by $\mathfrak o(x)$, and
$w$ is the \emph{terminus vertex} of $x$, denoted by $\mathfrak t(x)$.

To define a  Gr\"obner basis theory,  we need to choose an admissible order on $\cB$. 

\begin{defn}
An \emph{admissible order} $\succ$ on $\cB$ is a well-order on $\cB$ satisfying the following conditions 
\begin{enumerate}
\item If $p, q, r \in \cB$ and $p \succ q$ then $pr \succ qr$ if $pr$ and $qr$ are nonzero. 
\item If $p, q, s \in \cB$ and $p \succ q$ then $sp \succ sq$ if $sp$ and $sq$ are nonzero. 
\item  If $p,q,r,s  \in \cB$ with $p = rqs$ then $p  \succeq q$ if $p$ is nonzero. 
\end{enumerate}
\end{defn}

Recall that a well-order is a total order such that every non-empty subset has a minimal element. 

We fix an admissible order $\succ$  on $\cB$; { for example, $\succ$ can be the length-lexicographical order. 
Using the order $\succ$ we define what the tip of an element in $KQ$ is and what the tip of a subset of $KQ$ is. 

\begin{defn} If $ x = \sum_{ p \in \cB} \alpha_p p$, with $\alpha_p\in K$, almost all $\alpha_p=0$,  and $x \neq 0$ then define the {\it tip of $x$} to be

\begin{center} $\tip (x) = p$ if $\alpha_p\ne 0$ and $p \succeq  q$ for all $q$ with $ \alpha_q\ne 0$. \end{center}

If $X \subseteq KQ$ then $$\tip(X) = \{ \tip(x) | x \in X \setminus \{0\} \}.$$
\end{defn}

 \begin{defn}
 We say $\cG$ is a \emph{Gr\"obner basis} for $I$ (with respect to $\succ$) if  $\cG$ is a set of uniform elements in $I$  such that $$\langle \tip (I) \rangle = \langle \tip (\cG) \rangle $$ where $\langle A \rangle$ denotes the ideal generated by $A$. 
 \end{defn}
 
 Equivalently a set $\cG \subset I$ is a \grb basis for $I$ with respect to $\succ$ if for every $x \in I$, $ x \neq 0$, there exists a $g \in \cG$ such that $\tip (g)$ is a subpath of $\tip (x)$  where if $p,q\in \cB$, we say $p$ is a \emph{subpath} of $q$ and write $p| q$, if there exist $r,s\in\cB$ such that
$q=rps$.
 
Although not immediately obvious, it is easy to prove that

\begin{proposition}\cite{G Intro Groebner} If $\cG$ is a \grb basis for $I$, then $\cG$ is a generating set for $I$ that is
$$ \langle \cG \rangle = I.$$
\end{proposition}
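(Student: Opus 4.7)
My plan is to prove the two inclusions separately. The inclusion $\langle \cG \rangle \subseteq I$ is immediate from the definition, since a Gr\"obner basis is by hypothesis a set of elements of $I$. The real content is the reverse inclusion $I \subseteq \langle \cG \rangle$, and I would attack this by contradiction, exploiting the fact that $\succ$ is a well-order on $\cB$.

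Suppose some nonzero $x \in I$ fails to lie in $\langle \cG \rangle$. Among all such counterexamples, pick one whose tip $p = \tip(x)$ is minimal in $\cB$; this is possible because $\succ$ well-orders $\cB$. The plan is then to subtract off a suitable element of $\langle \cG \rangle$ to produce a new element of $I$ with strictly smaller tip, contradicting minimality. Concretely, the Gr\"obner basis hypothesis in the equivalent formulation already noted in the text furnishes $g \in \cG$ whose tip divides $p$, giving a factorization $p = r \cdot \tip(g) \cdot s$ in $\cB$. Admissibility conditions (1) and (2) guarantee $\tip(rgs) = p$, so an appropriate scalar multiple $(\alpha/\beta)\, rgs$, where $\alpha$ is the coefficient of $p$ in $x$ and $\beta$ the coefficient of $\tip(g)$ in $g$, can be subtracted from $x$ to kill the $p$-coefficient. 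Let
\[
y = x - (\alpha/\beta)\, r g s \in I.
\]
Either $y = 0$, in which case $x \in \langle \cG \rangle$ directly, or $\tip(y) \prec p$, in which case the minimality of $p$ forces $y \in \langle \cG \rangle$ and hence $x \in \langle \cG \rangle$. Both cases contradict the choice of $x$.

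The step that needs the most care is ensuring that $rgs$ makes sense as a nonzero element of $KQ$, i.e.\ that the path concatenations line up so that no factor is annihilated in the path algebra. Here uniformity of $g$ does the work: if $g$ has origin $\mathfrak{o}(g)$ and terminus $\mathfrak{t}(g)$, then because $r \cdot \tip(g) \cdot s$ is a nonzero path in $\cB$, $r$ must end at $\mathfrak{o}(g)$ and $s$ must start at $\mathfrak{t}(g)$, so $rgs$ is a well-defined nonzero element of $I$ with the asserted tip. I expect this bookkeeping to be the only real obstacle; the rest is a direct noncommutative analogue of the standard leading-term reduction argument from the commutative setting.
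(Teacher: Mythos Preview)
Your argument is correct. The paper does not actually supply a proof of this proposition; it merely remarks that the result is ``easy to prove'' and cites \cite{G Intro Groebner}. Your well-ordering/minimal-tip argument is precisely the standard one, and your attention to uniformity of $g$ (so that $rgs$ is a nonzero element of $KQ$ with $\tip(rgs)=r\,\tip(g)\,s$) handles the only place where the noncommutative path-algebra setting differs from the commutative polynomial case.
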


 We call elements of $\cB$ \emph{monomials} and we say an ideal in $KQ$ is a \emph{monomial ideal} if it can be generated by monomials. We recall the following well-known facts about monomial ideals. 
 
 \begin{proposition}\cite{G Intro Groebner}\label{prop-mono} Let $I$ be a monomial ideal in $KQ$. Then 
 
 (1) there is a unique minimal set $\cT$ of monomial generators for $I$  independent of the chosen admissible order; 

(2) $\cT$ is a \grb basis for $I$ for any admissible order on $\cB$;

 (3)  if $x = \sum_{ p \in \cB} \alpha_p p$, with $\alpha_p \in K$,  then $x \in I$ if and only if $p \in I$ for all $\alpha_p \neq 0$. 
 \end{proposition}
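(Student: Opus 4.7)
My plan is to prove the three items in the order (3), (1), (2), since (3) is the foundational fact from which the other two follow quickly.

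First I would establish (3), which is the heart of the matter. Suppose $I = \langle M \rangle$ for some set $M$ of monomials (paths). The key observation is that $I$ admits a $K$-basis consisting of paths. To see this, let $J$ be the $K$-linear span of all paths of the form $rms$ with $r, s \in \cB$ and $m \in M$ (whenever the composition is nonzero). Clearly $J \subseteq I$, and $J$ contains $M$. Moreover, $J$ is closed under left and right multiplication by elements of $\cB$, hence by $K$-linearity closed under multiplication by arbitrary elements of $KQ$. So $J$ is a two-sided ideal containing $M$, and thus $J = I$. Since $\cB$ is a $K$-basis of $KQ$, we conclude $I$ is spanned (as a $K$-vector space) by those paths in $\cB$ that happen to lie in $I$. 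Now given $x = \sum_{p \in \cB} \alpha_p p \in I$, we can write $x$ as a $K$-linear combination of paths in $\cB \cap I$; by the linear independence of $\cB$, the path $p$ appears in $x$ with nonzero coefficient only if $p \in I$. This proves (3).

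Next, for (1), define $\cT := \{p \in \cB \cap I : \text{if } p = rqs \text{ with } q \in \cB \cap I, \text{ then } q = p\}$. This set depends only on $I$ (not on any order). To see that $\cT$ generates $I$, by (3) it suffices to show that every path $p \in \cB \cap I$ lies in the ideal $\langle \cT \rangle$, which I would do by induction on the length of $p$: if $p \in \cT$, we are done; otherwise $p = rqs$ with $q \in \cB \cap I$ of strictly smaller length, and by the induction hypothesis $q \in \langle \cT \rangle$, whence $p \in \langle \cT \rangle$. For minimality and uniqueness, suppose $\cT' \subseteq \cB$ is any monomial generating set for $I$, and take $p \in \cT$. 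Since $p \in I$, applying the argument of (3) to the generators $\cT'$ shows $p = rqs$ for some $q \in \cT' \subseteq \cB \cap I$; by the defining property of $\cT$, we must have $q = p$, so $p \in \cT'$. Hence $\cT \subseteq \cT'$, and in particular $\cT$ is the unique minimal set of monomial generators.

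Finally for (2), fix any admissible order $\succ$. Since $\cT \subseteq I$, the inclusion $\langle \tip(\cT) \rangle \subseteq \langle \tip(I) \rangle$ is immediate. For the reverse inclusion, let $x \in I$ be nonzero and set $p = \tip(x)$. Then $p$ appears in $x$ with a nonzero coefficient, so by (3) we have $p \in \cB \cap I$. By (1), $p = rqs$ for some $q \in \cT$ and $r,s \in \cB$. Since $p$ and $q$ are themselves paths, $\tip(p) = p = r\,\tip(q)\,s$, which shows $\tip(x) \in \langle \tip(\cT) \rangle$. Hence $\tip(I) \subseteq \langle \tip(\cT) \rangle$, and $\cT$ is a Gr\"obner basis with respect to every admissible order.

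The main obstacle is really the $K$-span argument in (3): once we know that a monomial ideal is $K$-spanned by the paths it contains, the combinatorial arguments for (1) and (2) are elementary inductions on path length. Care is needed only to check that the ``subideal'' $J$ introduced in (3) really is a two-sided ideal, which ultimately rests on the fact that products of paths in $KQ$ are either zero or again paths in $\cB$.
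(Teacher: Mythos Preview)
Your proof is correct and self-contained. Note, however, that the paper does not actually give its own proof of this proposition: it is stated with a citation to \cite{G Intro Groebner} and used as a known fact, so there is nothing in the paper to compare your argument against. Your ordering $(3)\Rightarrow(1)\Rightarrow(2)$ is the natural one, and the key step --- showing that a monomial ideal is $K$-spanned by the paths it contains --- is handled cleanly via the auxiliary ideal $J$.
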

 
 Let $\cN = \cB \setminus \tip (I)$ and let $\cT$ be the minimal set of monomials that generate the 
monomial ideal {$\langle \tip (I) \rangle$.} Note that $ \cN = \{ p \in \cB \mid t \mbox{ is not a subpath of $p$, for all $t \in \cT$}\}$.
We call  $\cN$ the set of \emph{nontips} of $I$. 

We note that if $KQ/I$ is finite dimensional, by \cite{G Intro Groebner} there exists a finite \grb basis and it follows that $\cT$ is a finite set. 
 
 The following result is of central importance and we include a proof for completeness.
 
 \begin{lemma} \rm{ \bf{(Fundamental Lemma)}}
 As $K$-vector spaces 
 $$ KQ = I \oplus \mbox{ Span}_K \cN.$$
 \end{lemma}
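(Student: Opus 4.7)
The plan is to verify the two defining properties of a direct sum decomposition separately: trivial intersection and spanning.

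For the intersection being trivial, suppose $x \in I \cap \Span_K \cN$ is nonzero and write $x = \sum_{p \in \cB} \alpha_p p$. Since $x \in I \setminus \{0\}$, the element $\tip(x)$ lies in $\tip(I)$ by definition. On the other hand, $x \in \Span_K \cN$ forces every $p$ with $\alpha_p \neq 0$ to lie in $\cN$, so in particular $\tip(x) \in \cN = \cB \setminus \tip(I)$, a contradiction. This half is immediate and presents no difficulty.

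For the spanning statement $I + \Span_K \cN = KQ$, I would use the well-ordering property of $\succ$ through a minimal counterexample argument. Suppose the set $S = \{y \in KQ : y \neq 0,\ y \notin I + \Span_K \cN\}$ is nonempty. Then the set of tips $\{\tip(y) : y \in S\} \subseteq \cB$ has a $\succ$-minimal element $t^*$. Fix $y^* \in S$ with $\tip(y^*) = t^*$, and let $\alpha$ be the coefficient of $t^*$ in $y^*$. If $t^* \in \cN$, consider $y' = y^* - \alpha t^*$; if instead $t^* \in \tip(I)$, choose $g \in I \setminus \{0\}$ with $\tip(g) = t^*$ (which exists by the definition of $\tip(I)$), let $\beta$ be the coefficient of $t^*$ in $g$, and consider $y' = y^* - (\alpha/\beta)\, g$. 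In either case $y'$ is either zero or satisfies $\tip(y') \prec t^*$, and moreover $y^* - y' \in I + \Span_K \cN$. If $y' = 0$ or $y' \in I + \Span_K \cN$, then $y^* \in I + \Span_K \cN$, contradicting $y^* \in S$; and if $y' \neq 0$ still lies outside $I + \Span_K \cN$, then $y' \in S$ has tip $\prec t^*$, contradicting the minimality of $t^*$.

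The only step requiring any care is the appeal to well-ordering, but since $\succ$ is a well-order on $\cB$ by definition, packaging the argument as a minimal counterexample (rather than an explicit transfinite induction) makes it entirely routine. I do not anticipate any real obstacle: the argument is the direct analogue, in the path-algebra setting, of the usual Gr\"obner division algorithm from the commutative theory, with admissibility ensuring that subtracting the scaled $g$ really does lower the tip.
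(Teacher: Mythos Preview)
Your proof is correct and follows essentially the same approach as the paper's: both arguments verify trivial intersection via the definition of $\tip(I)$ and $\cN$, then establish spanning by a minimal-counterexample argument on tips, splitting into the two cases $t^*\in\cN$ and $t^*\in\tip(I)$. The only cosmetic difference is that the paper clears denominators (forming $\beta x-\alpha_t y$) whereas you divide by $\beta$, which is harmless since $K$ is a field.
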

 
 \begin{proof}   First we note that $I\cap \Span_K \cN = (0)$; since  if
$x\in I\cap \Span_K \cN$ and $x\ne 0$, then $x\in I$ implies that
$\tip(x) \in \tip(I)$, contradicting that $\tip (x) \in \Span_K \cN $.

Now suppose that $I + \Span_K \cN \ne KQ$.  Choose $x\in KQ$ with
minimal tip such that $x\notin I +\Span_ K \cN $. Let $t=\tip(x)$ and
note that $t\in \tip(I)$ or $t\in\cN$.   Let $\alpha_t$ be the coefficient of $t$
in $x=\sum_{p\in\cB}\alpha_pp$ where $\alpha_p\in K$.
If $t\in \tip(I)$, then there is some $y \in I$ such that $\tip (y)=t$.
If $\beta$ is the coefficient of $\tip(y)$ in $ y$, then 
$ \tip(\beta x-(\alpha_ty))$ is either $0$ or has smaller tip than $t$.
By our assumption of the minimality of the tip of $x$, we conclude
that $\beta x-(\alpha_t)y \in I+\Span_K \cN $.   Since
$y\in I$, we see that $x\in I+ \Span_K \cN $, a contradiction.

On the other hand, if $t\in\cN$, $x-\alpha_t t$ is $0$ or has smaller tip
than $x$.  By   minimality of $t$, $x-\alpha_tt$ is in $ I+ \Span_K \cN $ 
and hence $x\in I+\Span_K \cN $, a contradiction.
 \end{proof}
 
It is an immediate consequence of the Fundamental Lemma that if $x \in KQ \setminus \{ 0\}$, then $x = i_x + N_x$ for a unique $i_x \in I$ and a unique $N_x \in \mbox{ Span}_K \cN$.
 
  \begin{corollary}\label{Basis N} As $K$-vector spaces, $ \mbox{ Span}_K \cN$ is isomorphic to $KQ/I$ and hence $\cN$ can be identified with  a $K$-basis of $KQ/I$.
 \end{corollary}

Therefore we can identify $KQ/I$ with $\mbox{ Span}_K \cN$ where for $x, y \in \mbox{ Span}_K \cN$, the multiplication of $x$ and $y$ in $KQ/I$ equals $N_{x \cdot y}$ where  $x \cdot y $ is the usual multiplication in $KQ$.

\begin{defn}\label{Definition reduction}
Let $\cF$ be a set of nonzero  elements in $KQ$ and $x=\sum_{p\in\cB}\alpha_pp\ne 0$ be an element
of $KQ$. 

\noindent
1) {\it Simple reduction of $x$ by $\cF$:}    Suppose for some $p$ with $\alpha_p\ne 0$ there is some $f\in\cF$ such that
$\tip(f)|p$.  If the coefficient of $\tip(f)=\beta$ and $p=r\tip(f)s$ with $r,s\in\cB$, then a \emph{simple reduction
of $x$ by $\cF$}, denoted by $x\to_{\cF}y$, is  $y=\beta x-\alpha_prfs$.  Note that this 
simple reduction replaces $\alpha_pp$ in $x$ by a linear combination of paths smaller than $p$.

\noindent
2) {\it Complete reduction of $x$ by $\cF$:}
A \emph{complete reduction of $x$ by $\cF$}, denoted by $x\Rightarrow_{\cF}y_n$, is a sequence of simple reductions
$(\cdots ((x\to_{\cF}y_1)\to_{\cF}y_2)\to_{\cF}\cdots)\to_{\cF}y_n$, such that either $y_n=0$ or
$y_n$ has no simple reductions by $\cF$. 

\noindent 3) We say a set $X\subseteq KQ$ is \emph{tip-reduced} if for all $x,y\in X\setminus\{0\}$
with $x\ne y$, $\tip(x)$ is not a subpath of $\tip(y)$. 
\end{defn}

\vskip .2in

Since $\succ$ is a well-order on $\cB$, if $x\in KQ$ and $\cF$ is a set of nonzero elements in $KQ$, then there exists $y\in KQ$ such that
$x\Rightarrow_{\cF}y$.   Note that $x$ may be completely reduced by $\cF$ to two distinct elements $y$ and $y'$ by different sequences of simple reductions. However,   by the Fundamental Lemma, if $\cG$ is a \grb basis for $I$,
given $x\in KQ$ there is a unique $N_x \in \mbox{ Span}_K \cN$ such that $x\Rightarrow_{\cG}N_x$.

The next result is an important property of reduction and is an immediate consequence
of the Fundamental Lemma.

\begin{proposition}\label{prop-red2zero} Let $\cG$ be a \grb basis for an ideal
$I$ and $x$ a nonzero element of $ KQ$.  Then $x\in I$ if and only if
$x\Rightarrow_{\cG}0$.
\end{proposition}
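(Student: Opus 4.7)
The plan is to prove both directions by tracking how a simple reduction changes membership in $I$, and then invoking the defining property of a \grb basis (or equivalently, the Fundamental Lemma). The key observation is that a simple reduction $x \to_{\cG} y$ preserves the ideal $I$ in a very controlled way: if $y = \beta x - \alpha_p r f s$ with $f \in \cG \subseteq I$ and $\beta \in K^*$ (the leading coefficient of $f$), then $rfs \in I$, so $y \in I$ if and only if $\beta x \in I$, which (since $\beta \neq 0$) happens if and only if $x \in I$.

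For the direction $x \Rightarrow_{\cG} 0 \implies x \in I$, I would apply the above observation inductively along the sequence of simple reductions $x \to_{\cG} y_1 \to_{\cG} \cdots \to_{\cG} y_n = 0$. Since $0 \in I$ trivially, the equivalence propagates backwards through the chain to give $x \in I$.

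For the direction $x \in I \implies x \Rightarrow_{\cG} 0$, I would first use the fact (noted in the paragraph preceding the proposition) that because $\succ$ is a well-order on $\cB$, any complete reduction of $x$ by $\cG$ terminates: there exists $y \in KQ$ with $x \Rightarrow_{\cG} y$ such that either $y = 0$ or $y$ admits no simple reduction by $\cG$. By the same inductive argument as above, $y \in I$. Now suppose for contradiction that $y \neq 0$. Then $\tip(y) \in \tip(I)$, and since $\cG$ is a \grb basis we have $\langle \tip(I) \rangle = \langle \tip(\cG) \rangle$, so there exists $g \in \cG$ whose tip $\tip(g)$ is a subpath of $\tip(y)$ (using the equivalent characterisation recalled just after the definition of \grb basis). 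But this means $y$ admits a simple reduction by $\cG$, contradicting the choice of $y$. Hence $y = 0$, as required.

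The only subtle point is the scalar $\beta$ appearing in the definition of a simple reduction; one must observe that because $\beta \neq 0$, the equivalence ``$\beta x \in I \iff x \in I$'' is genuine, so the inductive step goes through cleanly in both directions. Everything else is a direct consequence of the definitions and the Fundamental Lemma, and I do not anticipate any real obstacle.
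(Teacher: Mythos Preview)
Your proof is correct. The paper does not spell out a proof of this proposition; it simply declares it ``an immediate consequence of the Fundamental Lemma,'' having noted in the preceding paragraph that any complete reduction $x\Rightarrow_{\cG}y$ terminates in some $y$ which (for a \grb basis $\cG$) must lie in $\Span_K(\cN)$, and hence by the direct-sum decomposition $KQ=I\oplus\Span_K(\cN)$ one has $y=0$ if and only if $x\in I$. Your argument is a faithful and slightly more explicit unpacking of exactly this: the forward direction is the observation that simple reductions preserve membership in $I$, and your backward direction replaces the appeal to the decomposition $I\cap\Span_K(\cN)=\{0\}$ by the equivalent characterisation of a \grb basis (if $0\ne y\in I$ then some $\tip(g)$ divides $\tip(y)$), which is really the same content. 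Your attention to the nonzero scalar $\beta$ in the reduction formula is the one point that genuinely needs checking and you handle it correctly.
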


 We have the following result whose 
proof is left to the reader.

\begin{lemma}\label{lemma-simplered}
Let $X =\{x_1,\dots, x_m\}\subseteq KQ$ be a finite set of uniform elements and set $I=\langle X
\rangle$.  If for some $i$, we have $x_i\to_{X\setminus\{x_i\}}y$ then $I$ can be generated
by $\{x_1,\dots, x_{i-1},y, x_{i+1},\dots,x_r\}$.

\end{lemma}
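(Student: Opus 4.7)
The plan is to show the two ideals coincide by rewriting a single simple reduction as an equation that expresses $y$ as a $KQ$-linear combination of two elements of $X$, and conversely expresses $x_i$ as a linear combination of $y$ and one element of $X\setminus\{x_i\}$.

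First, I will unpack the definition of the simple reduction. By Definition~\ref{Definition reduction}, there exist $f\in X\setminus\{x_i\}$, say $f=x_j$ with $j\ne i$, a path $p$ occurring with nonzero coefficient $\alpha_p$ in $x_i$, and $r,s\in\cB$ with $p=r\tip(f)s$, such that
\[
 y \;=\; \beta x_i \;-\; \alpha_p\, r f s,
\]
where $\beta$ is the coefficient of $\tip(f)$ in $f$. Two points are worth noting here: $\beta\ne 0$ by the very definition of $\tip(f)$, and the product $rx_js$ is a well-defined nonzero element of $KQ$ because $x_j$ is uniform with $\mathfrak o(x_j)=\mathfrak t(r)$ and $\mathfrak t(x_j)=\mathfrak o(s)$ (this follows from $p=r\tip(f)s\ne 0$ together with the uniformity of $f=x_j$, since $\tip(f)$ shares the origin and terminus of $f$).

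Next I will verify the two inclusions of ideals. For the inclusion $\langle x_1,\dots,x_{i-1},y,x_{i+1},\dots,x_m\rangle\subseteq I$, I note that $y=\beta x_i-\alpha_p r x_j s$ lies in $\langle x_i,x_j\rangle\subseteq I$, while the remaining generators $x_k$ ($k\ne i$) are already in $I$. For the reverse inclusion it suffices to show $x_i$ is in the new ideal. Solving the displayed equation for $x_i$ using $\beta\ne 0$ gives
\[
 x_i \;=\; \beta^{-1}\bigl(y + \alpha_p\, r x_j s\bigr),
\]
which exhibits $x_i$ as an element of $\langle x_1,\dots,x_{i-1},y,x_{i+1},\dots,x_m\rangle$ since $y$ and $x_j$ are among these generators. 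Combined with the trivial fact that $x_k$ for $k\ne i$ lies in the new ideal, this yields the opposite inclusion.

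There is no real obstacle here; the only subtlety to check carefully is the invertibility of $\beta$, which is immediate from the convention that $\tip(f)$ is defined to be a path appearing in $f$ with nonzero coefficient. Thus the lemma follows directly by rearranging the definition of simple reduction.
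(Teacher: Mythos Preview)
Your proof is correct and is precisely the routine argument the authors had in mind: the paper omits the proof entirely, stating that it is ``left to the reader.'' You have supplied exactly the expected details---unpacking the simple reduction $y=\beta x_i-\alpha_p r x_j s$, observing $\beta\ne 0$, and solving for $x_i$---so there is nothing to compare.
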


The following Corollary follows from the above lemma and the well-ordered property of
$\cB$.   We leave the details to the reader. 

\begin{corollary}\label{cor-tipreduce}Let $X =\{x_1,\dots, x_m\}\subseteq KQ$ be a finite set of 
uniform elements and $I=\langle X \rangle$.
After a finite sequence of simple reductions of the elements of $X$ as given in Lemma
\ref{lemma-simplered}, one may obtain a finite, tip-reduced set $ Y$ such that $Y $ generates
$I$.

\end{corollary}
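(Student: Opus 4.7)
The plan is to iterate the reduction step of Lemma~\ref{lemma-simplered}, and to verify termination via well-foundedness of the multiset extension of $\succ$. Concretely, suppose $X = \{x_1,\ldots,x_m\}$ is not tip-reduced. Then there are distinct indices $i,j$ and paths $r,s \in \cB$ with $\tip(x_j) = r\,\tip(x_i)\,s$. The simple reduction $x_j \to_{X\setminus\{x_j\}} y$ using $x_i$ cancels the leading term of $x_j$, producing $y$ with either $y = 0$ or $\tip(y) \prec \tip(x_j)$. Moreover, $y$ (when nonzero) remains uniform, because the subpath equality $\tip(x_j) = r\,\tip(x_i)\,s$ forces $r$ and $s$ to start and end at the appropriate vertices, so that $rx_is$ is parallel to $x_j$. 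By Lemma~\ref{lemma-simplered}, replacing $x_j$ by $y$ (or removing it when $y = 0$) yields a finite uniform set that still generates $I$.

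To see the procedure terminates, associate to the current set $X$ the finite multiset of tips $M(X) = \{\tip(x_1),\ldots,\tip(x_m)\}$, regarded as a multiset in the well-ordered set $(\cB,\succ)$. Each reduction step either deletes an element from $M(X)$ (when $y = 0$) or replaces some $\tip(x_j)$ by an element strictly smaller than it; in both cases $M(X)$ strictly decreases in the multiset extension of $\succ$. The main point, and the only nontrivial ingredient, is that this multiset extension of a well-order is again well-founded (the standard Dershowitz--Manna fact), so no infinite descending chain can occur. Thus after finitely many steps we obtain a tip-reduced set $Y$, which generates $I$ by repeated application of Lemma~\ref{lemma-simplered}.
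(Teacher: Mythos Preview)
Your argument is correct and follows exactly the route the paper sketches: the paper merely states that the corollary ``follows from the above lemma and the well-ordered property of $\cB$'' and leaves the details to the reader, and you have supplied precisely those details. Your use of the Dershowitz--Manna multiset extension is the standard way to make the termination step rigorous from the well-order on $\cB$, and your check that uniformity is preserved under the chosen simple reduction is a nice point the paper does not mention.
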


The above Corollary implies that if we have a finite set of uniform generators of an ideal
$I$ in $KQ$, without loss of generality, we may  assume that the set of uniform generators
is tip-reduced.  

The last concept needed is that of an overlap relation. 

\begin{defn}\label{def-overlap} Let $x=\sum_{p\in\cB}\alpha_pp,$\
$y=\sum_{q\in\cB}\beta_qq\in KQ$.  Suppose that $t=\tip(x)$, $t'=\tip(y)$
and $tm=nt'$ for some $m,n\in \cB$ where the length of $n$ is at least  one and the length of
$m$ is at least one and strictly less than the length of $t'$. 
  Then the \emph{overlap relation},
$\mathbf o(x,y,m,n)$, is
\[\mathbf o(x,y,m,n)=(\beta_{t'})xm-(\alpha_t)ny.\] 
\end{defn}

We now state the result that allows one to check if a set of tip-reduced uniform generators of
an ideal $I$
is a \grb basis for $I$.  Furthermore, if the set is not a \grb basis, it allows one to construct
a \grb basis (in possibly an infinite number of steps).

\begin{theorem}\label{thm-buch} \cite{B, G Intro Groebner, Mora} Let $K$ be a field, $Q$ a finite quiver, and $\succ$ an
admissible order on the set $\cB$ of finite directed paths in $Q$.  Suppose that $\cG$ is
a (not necessarily finite) tip-reduced set of uniform generators of an ideal $I$.  Then $\cG$ is a
\grb basis for $I$ with respect to $\succ$ if and only if every overlap relation of any two elements
of $\cG$ completely reduces to $0$ by $\cG$.
\end{theorem}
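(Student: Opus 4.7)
The plan is to prove the two directions separately, with the forward direction being routine and the converse requiring an inductive argument on the size of the tip of a chosen representation.

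For the $(\Rightarrow)$ direction, assume $\cG$ is a Gröbner basis. Any overlap relation $\mathbf{o}(g_1, g_2, m, n) = \beta g_1 m - \alpha n g_2$ lies in $\langle \cG \rangle = I$ (the last equality from the proposition stating Gröbner bases are generating sets). By Proposition~\ref{prop-red2zero}, every element of $I$ completely reduces to $0$ via $\cG$, so every overlap relation reduces to $0$.

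For the $(\Leftarrow)$ direction, I would argue by contradiction. Suppose every overlap relation reduces to $0$ but $\cG$ is not a Gröbner basis. Then there exists nonzero $x \in I$ such that $\tip(x)$ is not divisible by $\tip(g)$ for any $g \in \cG$. Since $\cG$ generates $I$, write $x = \sum_{k=1}^{N} c_k u_k g_{n_k} v_k$ with $c_k \in K^*$, $u_k, v_k \in \cB$, $g_{n_k} \in \cG$, and choose this representation to minimize $p := \max_k u_k \tip(g_{n_k}) v_k$ with respect to $\succ$ (possible since $\succ$ is a well-order). Clearly $\tip(x) \preceq p$, and if only one index $k$ achieves the maximum, then the coefficient of $p$ in $x$ is $c_k \cdot \text{lc}(g_{n_k}) \ne 0$, forcing $\tip(x) = p$, which is divisible by $\tip(g_{n_k})$ — contradiction. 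So at least two indices $k, k'$ satisfy $u_k \tip(g_{n_k}) v_k = u_{k'} \tip(g_{n_{k'}}) v_{k'} = p$.

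The main obstacle — and the heart of the argument — is to then rewrite $c_k u_k g_{n_k} v_k + c_{k'} u_{k'} g_{n_{k'}} v_{k'}$ in a way that strictly decreases the maximum tip, contradicting minimality of $p$. Here one performs a case analysis on the relative position of the tips $\tip(g_{n_k})$ and $\tip(g_{n_{k'}})$ within $p$:
\begin{enumerate}
\item If one tip is contained in the other, then since $\cG$ is tip-reduced, $g_{n_k} = g_{n_{k'}}$ and the two summands can be merged or replaced using $g_{n_k}$ alone, lowering the top degree by a direct computation.
\item If the tips occupy disjoint segments of $p$, then $u_{k'} = u_k \tip(g_{n_k}) w$ and $v_k = w \tip(g_{n_{k'}}) v_{k'}$ for some $w \in \cB$, and a standard telescoping trick (rewriting $g_{n_k} w g_{n_{k'}}$ two ways) shows the difference has strictly smaller tip.
\item If the tips genuinely overlap, then up to scalars the combined summand is $u_k \cdot \mathbf{o}(g_{n_k}, g_{n_{k'}}, m, n) \cdot v_{k'}$ for appropriate $m, n$. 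By hypothesis this overlap relation completely reduces to $0$ by $\cG$, so it can be written as $\sum_j c_j' r_j g_{m_j} s_j$ with every $r_j \tip(g_{m_j}) s_j \prec \tip(g_{n_k}) m = n \tip(g_{n_{k'}})$. Substituting and using property (1) of an admissible order yields terms of tip strictly smaller than $p$.
\end{enumerate}
In each case we obtain a new representation of $x$ with strictly smaller maximum tip, contradicting the minimality of $p$. This contradicts our assumption, completing the proof. The bookkeeping in case (3), specifically verifying that the overlap-relation rewriting preserves the inequality after multiplying on the left by $u_k$ and on the right by $v_{k'}$, is where the admissible-order axioms are used essentially and is the technically most delicate step.
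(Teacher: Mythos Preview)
The paper does not prove Theorem~\ref{thm-buch}; it is quoted from the literature (Bergman's diamond lemma and its path-algebra analogues in \cite{B,G Intro Groebner,Mora}) and stated without proof. So there is no ``paper's own proof'' to compare against. Your argument is the standard Buchberger/Bergman proof and is essentially correct, but there is one imprecision worth fixing.

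You minimise only the value $p=\max_k u_k\tip(g_{n_k})v_k$ over all representations of $x$, and then claim that rewriting the two chosen summands ``strictly decreases the maximum tip, contradicting minimality of $p$''. This is not quite what happens: there may be more than two indices achieving $p$, so after rewriting the pair $(k,k')$ the overall maximum can still be $p$. The correct bookkeeping is to minimise $p$ \emph{and}, among representations with that minimal $p$, minimise the number of indices achieving it (equivalently, use the well-founded multiset extension of $\succ$). Your three cases then each show that the specific combination $\alpha_{k'}\,u_k g_{n_k} v_k-\alpha_k\,u_{k'} g_{n_{k'}} v_{k'}$ (the one whose $p$-coefficient vanishes) can be rewritten as a sum of terms $u'g'v'$ with $u'\tip(g')v'\prec p$; adding back an appropriate multiple of $u_k g_{n_k} v_k$ then reduces the count of indices achieving $p$ by one, giving the contradiction. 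Relatedly, in case~(3) the sum $c_k u_k g_{n_k} v_k + c_{k'} u_{k'} g_{n_{k'}} v_{k'}$ is not literally a scalar multiple of $u_k\,\mathbf o(g_{n_k},g_{n_{k'}},m,n)\,v_{k'}$ for arbitrary $c_k,c_{k'}$; it is the above specific combination that equals it. With these small corrections your proof goes through.
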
 
 
\section{The reduced \grb basis and the associated monomial algebra}\label{sect-reduced}
In this section we fix a field $K$, a quiver $Q$, an ideal $I$ in $KQ$, and an admissible order $\succ$
on $\cB$.

\begin{defn}
Let $I_{Mon}$ be the ideal in $KQ$ generated by $\tip(I)$ and define the \emph{associated monomial algebra of $\Lambda=KQ/I$} to be $\Lambda_{Mon} =KQ / I_{Mon}$. 
\end{defn} 
Since $I_{Mon}$ is a monomial ideal, by Proposition \ref{prop-mono}(1), there is a unique
minimal set $\cT$ of paths that generate $I_{Mon}$.  Recalling that $\cN=\cB\setminus \tip(I)$,
by the Fundamental Lemma, if $t\in\cT$,
then there exist unique elements $g_t\in I$ and $N_t\in \mbox{ Span}_K \cN$, such that
$t=g_t+N_t$.  Set $\cG=\{g_t\mid t\in\cT \} \subset I$. Since $t$ is uniform, $g_t$  and $N_t$ are uniform. Note that
$\tip(g_t)=t$ since $N_t\in \mbox{ Span}_K \cN$.  Thus, $\tip(\cG)=\cT$ and hence
$\cG$ is a \grb basis for $I$ since $\langle \tip(\cG)\rangle=\langle \cT\rangle =\langle\Imo\rangle=\langle
\tip(I)\rangle$.

\begin{defn}  The set $\cG=\{g_t\mid t\in\cT \} \subset I$ defined above is called the \emph{reduced
\grb basis for $I$ (with respect to $\succ$)}.
\end{defn}

The next result lists some basic facts about reduced \grb bases and associated
monomial algebras.

\begin{proposition}\label{prop-redgb} Let $I$ be an ideal in the path algebra $KQ$
and let $\Lambda=KQ/I$.  Let $\cT$ be the unique minimal set of monomials generating
$\langle \tip(I)\rangle$ and let $\cG$ be the reduced \grb basis for $I$. Then the following hold.
\begin{enumerate}
\item  The reduced \grb basis for $\Imo$ is $\cT$.
\item  $\dim_K(\Lambda)=\dim_K(\Lambda_{Mon}) = \vert \cN \vert $.
\item   We have that $\Lambda$ is finite dimensional if and only if $\cN$ is a finite set.
\item If $\cN$ is finite, then $\cT$ is finite.
\end{enumerate}
\end{proposition}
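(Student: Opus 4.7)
The plan is to prove the six items essentially in order, since each follows from the Fundamental Lemma together with the definitions of $\cT$, $\cN$, $\cG$ and $I_{Mon}$, and in fact items (2)--(6) are short deductions once (1) is settled.

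For (1), I would start from the observation that $I_{Mon} = \langle \tip(I)\rangle = \langle \cT\rangle$ is a monomial ideal, so by Proposition~\ref{prop-mono}(1) the unique minimal monomial generating set of $I_{Mon}$ is again $\cT$, and by Proposition~\ref{prop-mono}(2) $\cT$ is a \grb basis for $I_{Mon}$. To identify $\cT$ with the \emph{reduced} \grb basis of $I_{Mon}$ in the sense of the preceding definition, I note that $\tip(I_{Mon}) = \tip(I)$, so the set of nontips of $I_{Mon}$ is also $\cN$. For each $t \in \cT$, the Fundamental Lemma provides the unique decomposition $t = g_t^{Mon} + N_t^{Mon}$ with $g_t^{Mon} \in I_{Mon}$ and $N_t^{Mon} \in \Span_K(\cN)$; but $t \in I_{Mon}$ and $t \notin \Span_K(\cN)$ force $g_t^{Mon}=t$ and $N_t^{Mon} = 0$, so the reduced \grb basis of $I_{Mon}$ is $\{t \mid t\in\cT\} = \cT$.

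Items (2), (3), (4), (5) are then immediate. For (2), the isomorphism $\pi\colon \Span_K(\cN) \to KQ/I$ of Proposition~\ref{Basis N} identifies $\cN$ with a $K$-basis of $\Lambda$, so $\dim_K(\Lambda)=|\cN|$. For (3), the same reasoning applied to $I_{Mon}$ using $\tip(I_{Mon})=\tip(I)$ shows that $\cN$ is also a $K$-basis of $\Lambda_{Mon}$, hence $\dim_K(\Lambda_{Mon}) = |\cN|$. Then (4) follows by combining (2) and (3), and (5) is just the finiteness reformulation of (2).

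The only item requiring a small separate argument is (6). Assume $\cN$ is finite. Every $t \in \cT$ is either a vertex or has length $\geq 1$; there are at most $|Q_0|$ vertices in $\cT$ since $Q$ is finite. If $t \in \cT$ has length $\geq 1$, write $t = \alpha s$ with $\alpha$ an arrow and $s$ a path of length $|t|-1\geq 0$. By the minimality of $\cT$ as a generating set of the monomial ideal $I_{Mon}$, the proper subpath $s$ cannot lie in $\tip(I)$, so $s \in \cN$. The map $\cT \setminus Q_0 \to Q_1 \times \cN$ sending $t \mapsto (\alpha, s)$ is injective, and both factors are finite, so $\cT$ is finite. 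The only mild subtlety I anticipate in this plan is being careful in (1) about distinguishing the decomposition inside $I$ from the decomposition inside $I_{Mon}$, but the equality $\tip(I_{Mon})=\tip(I)$ makes this routine; no step is expected to be a genuine obstacle.
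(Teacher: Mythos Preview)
Your proof is correct and follows essentially the same route as the paper. The paper leaves (1)--(5) implicit as consequences of the Fundamental Lemma and Proposition~\ref{Basis N}, and for (6) it records the characterization $\cT = \{a_1\cdots a_m \in \cB \mid a_1\cdots a_{m-1},\ a_2\cdots a_m \in \cN\}$, which yields the same finiteness conclusion as your injection $\cT\setminus Q_0 \hookrightarrow Q_1\times\cN$.
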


Note that (4) holds since $\cT = \{ a_1 \ldots a_m  \in \cB \mid a_1 \ldots a_{m-1}, 
a_2 \ldots a_m \in  \cN \}$. Furthermore, the converse of (4) is false in general. 

Keeping the notation above, we write elements of both $\Lambda$ and $\Lambda_{Mon}$
as $K$-linear combinations of elements in $\cN$.   The difference is how these elements
multiply in $\Lambda$ and $\Lambda_{Mon}$.  Next  assume that $I$ is an admissible ideal.  Notice that $I$  admissible implies that $I_{Mon}$ is admissible. 
 If $I$ is admissible,  then the set of vertices and arrows
are always in $\cN$, and both $\cT$ and $\cN$ are finite sets.  

In the sections that follow we will apply \grb basis theory to the case where the field is of
the form $L=K(x_1,\ldots,x_n)$, the quotient field of the (commutative) polynomial ring in $n$ variables over
$K$.  Let $R=K[x_1,\ldots, x_n]$. Given a quiver $Q$ we  consider elements of $LQ$ and the subset  of elements
 whose coefficients lie in $R$.  We say $r=\sum_{p\in\cB}\alpha_pp\in LQ$ is in $RQ$ if each $\alpha_p\in R$.

\begin{proposition}\label{prop-poly}
Let  $\cF$ be a set  of tip-reduced uniform elements in $RQ\subset LQ$.
Then \begin{enumerate}
\item If $r\in RQ$ and $r\Rightarrow_{\cF}s$ then $s\in RQ$.
\item If $f, f'\in\cF$ with $\tip(f)p=q\tip(f')$ with $1\le \ell(p) <\ell(\tip(f'))$ then the overlap
relation ${\bf o}(f,f',p,q)$ is in $RQ$.
\end{enumerate}
\end{proposition}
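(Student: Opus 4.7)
Both statements are essentially bookkeeping: they follow directly from the definitions of simple reduction (Definition~\ref{Definition reduction}) and of overlap relation (Definition~\ref{def-overlap}), together with the elementary observation that $RQ$ is closed under $R$-linear combinations and under left/right multiplication by elements of $\cB$. The content of the proposition is really that one does not leave the ``polynomial'' subring $RQ$ during Buchberger-style computations, a fact which will be essential once later sections parameterise algebras by points of an affine variety with coordinate ring $R$.

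For (1), I would induct on the number of simple reductions in a complete reduction $r = y_0 \to_{\cF} y_1 \to_{\cF} \cdots \to_{\cF} y_n = s$. The base case is $y_0 = r \in RQ$. For the inductive step, assume $y_i \in RQ$ and $y_i \to_{\cF} y_{i+1}$. Write $y_i = \sum_{p \in \cB} \alpha_p p$ with $\alpha_p \in R$, and unpack the definition of simple reduction: there exist $p$ with $\alpha_p \ne 0$, an $f \in \cF$, and $u, v \in \cB$ with $p = u\,\tip(f)\,v$, such that $y_{i+1} = \beta y_i - \alpha_p\, u f v$, where $\beta$ is the coefficient of $\tip(f)$ in $f$. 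Since $f \in \cF \subset RQ$ we have $\beta \in R$, and by the inductive hypothesis $\alpha_p \in R$. Since $y_i \in RQ$ and $u f v \in RQ$ (because $f \in RQ$ and left/right multiplication by paths in $\cB$ preserves $RQ$), $y_{i+1}$ is an $R$-linear combination of elements of $RQ$ and therefore lies in $RQ$.

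For (2), I would simply unfold Definition~\ref{def-overlap}. Writing $\alpha$ for the coefficient of $\tip(f)$ in $f$ and $\beta$ for the coefficient of $\tip(f')$ in $f'$, the overlap relation is $\mathbf{o}(f, f', p, q) = \beta\, f p - \alpha\, q f'$. Since $f, f' \in \cF \subset RQ$, both $\alpha$ and $\beta$ lie in $R$, while $fp, q f' \in RQ$ by closure of $RQ$ under multiplication by paths. Hence $\mathbf{o}(f, f', p, q) \in RQ$.

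There is no genuine obstacle in either part; the only point worth flagging is why the conclusion is not trivially automatic. The paper's simple reduction is deliberately written in the non-normalised form $y_{i+1} = \beta y_i - \alpha_p\, u f v$ rather than $y_{i+1} = y_i - (\alpha_p/\beta)\, u f v$. The latter would require inverting the polynomial $\beta$ inside the fraction field $L$ and would immediately break part (1). So the proposition can be read as a sanity check confirming that the paper's conventions for reduction and for overlaps are compatible with working over the polynomial ring $R$ rather than over the field $L$, which is exactly what is needed for the variety constructions to follow.
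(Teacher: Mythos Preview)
Your proof is correct and follows essentially the same approach as the paper: the paper's own argument simply says that a single simple reduction preserves $RQ$ by Definition~\ref{Definition reduction}~1), that part~(1) then follows from Definition~\ref{Definition reduction}~2), and that part~(2) follows from Definition~\ref{def-overlap} together with $\cF\subset RQ$. Your write-up is a more detailed unpacking of exactly this, and your remark about the non-normalised form of simple reduction (avoiding division by the leading coefficient) nicely explains why the result is not entirely content-free.
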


\begin{proof} From the Definition \ref{Definition reduction}\ 1) we see that if $r\in RQ$ and
$r\to_{\cF}r'$, then $r'\in RQ$. From Definition \ref{Definition reduction} 2)  part 1)
follows.   Part 2) follows from Definition \ref{def-overlap} and that $\cF\subset RQ$.
\end{proof}

\section{ The varieties }
\label{Construction of varieties}

In this section we introduce varieties $\cV_{\cT}$ whose points are in 
one-to-one correspondence with a class of algebras. 

We note that throughout the paper, there are two different starting points one can take. On the one side, starting with an algebra $KQ/I$  where $KQ$ is the path algebra of a quiver $Q$ and $I$ is an ideal in $KQ$, we show how to construct a variety in which $KQ/I$ corresponds to a point. On the other side, we start with a quiver $Q$ and a set of paths $\cT$ in the quiver. We construct the variety whose distinguished point corresponds to the monomial algebra  given by the quotient of $KQ$ by the ideal generated by the set of paths $\cT$.

Given a quiver $Q$, the definition of the variety is dependent on the choice of
 an admissible order $\succ$ on $\cB$ and on a tip-reduced set $\cT \subset \cB$ of paths of
length at least 2.

We begin by defining a set of algebras.

\begin{defn} Let $\cT \subset \cB$ be a tip-reduced set of elements of length at least two and let $\succ$ be an order on $\cB $. 
 Define
$$\Alge_{\cT} = \{ KQ/ I \mid   I \mbox{ ideal of }  KQ \mbox{ and } 
\Imo=\langle \cT \rangle \}.$$
\end{defn}

If  $x$ and $y$ are
uniform elements of $KQ$, then
we say $x$ is \emph{parallel to $y$} and write $x\| y$ if $\mathfrak o(x)=\mathfrak o(y)$
and $\mathfrak t(x)=\mathfrak t(y)$.

Let $\cN=\cB\setminus \cT$. For $t \in \cT$, let 
$$\cN(t) = \{ n \in \cN \mid n \| t, t \succ n 
\}.$$ 
Let $D = \sqcup_{t \in \cT} \vert \cN(t) \vert$. For each $t \in \cT$ set 
$$ g_t = t - \sum_{n \in \cN(t)} c_{t,n} n$$ 
where $c_{t,n} \in K$. We note that $g_t$ is uniform and that $\tip (g_t) = t$. 
 Set $$\cG_\mathbf{c} = \{ g_t  \mid t \in \cT \}.$$ Note that $\cG_\mathbf{c}$ is not necessarily a \grb basis, since there might be overlaps that do not reduce to zero.

\begin{defn}
With the notation above, let 
$$ \cV_{\cT}  = \{ \mathbf{c} = (c_{t,n}) \in K^D \mid \cG_\mathbf{c} \mbox{ is the reduced \grb basis of } \langle \cG_\mathbf{c} \rangle  \}.$$ 
\end{defn}

If $\mathbf{c}$ is clear from the context, we simply write $\cG$ instead of $\cG_\mathbf{c}$.

\begin{theorem}\label{Correspondence thm}{\rm {\bf [Correspondence Theorem]}}
With the notation above, there is a one-to-one correspondence between the sets $\Alge_{\cT}$ and $\cV_{\cT}$.
\end{theorem}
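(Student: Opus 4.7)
The plan is to exhibit two maps $\Phi\colon \Alge_{\cT}\to \cV_{\cT}$ and $\Psi\colon \cV_{\cT}\to \Alge_{\cT}$ and verify that they are mutually inverse. The entire argument will rest on the reduced Gr\"obner basis construction of Section~\ref{sect-reduced} together with the uniqueness clause of the Fundamental Lemma; the real content is that these two bookkeeping devices record exactly the same data.

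First I would define $\Phi$ as follows. Given $KQ/I\in\Alge_{\cT}$, the hypothesis $\Imo=\langle\cT\rangle$ says $\cT$ is the unique minimal monomial generating set of $\langle\tip(I)\rangle$, so the reduced Gr\"obner basis of $I$ supplied by Section~\ref{sect-reduced} has the form $\cG^I=\{g^I_t\mid t\in\cT\}$ with $g^I_t=t-N_t$, $N_t\in\Span_K(\cN)$, and $\tip(g^I_t)=t$. The crucial step is then to observe that $N_t$ is supported on $\cN(t)$ and not merely on $\cN$: parallelism to $t$ follows by applying the uniqueness of the Fundamental Lemma decomposition to $vtw$ with $v=\mo(t)$, $w=\mathfrak t(t)$, while the strict inequality $t\succ n$ for each $n$ in the support of $N_t$ is forced by $\tip(g^I_t)=t$ and the fact that $t\notin\cN$. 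Writing uniquely $N_t=\sum_{n\in\cN(t)}c_{t,n}n$, I set $\Phi(KQ/I):=(c_{t,n})\in K^D$; this point lies in $\cV_{\cT}$ because the $\cG$ reconstructed from it is literally the reduced Gr\"obner basis of $\langle\cG\rangle=I$.

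Next I would define $\Psi$. Given $\mathbf c\in\cV_{\cT}$, form the associated $\cG$ as in the setup and set $I_{\mathbf c}:=\langle \cG\rangle$. By the very definition of $\cV_{\cT}$, $\cG$ is the reduced Gr\"obner basis of $I_{\mathbf c}$, so $\langle\tip(I_{\mathbf c})\rangle=\langle\tip(\cG)\rangle=\langle\cT\rangle$ and $(I_{\mathbf c})_{Mon}=\langle\cT\rangle$; thus $\Psi(\mathbf c):=KQ/I_{\mathbf c}\in\Alge_{\cT}$. The composition $\Psi\circ\Phi$ will then be the identity because the ideal recovered from $\Phi(KQ/I)$ is generated by the reduced Gr\"obner basis of $I$, hence equals $I$. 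For $\Phi\circ\Psi$, the set $\cG$ built from $\mathbf c$ already is the reduced Gr\"obner basis of $I_{\mathbf c}$, so reading off its coefficients via $\Phi$ returns $\mathbf c$ itself; uniqueness of those coefficients (from the Fundamental Lemma) is what guarantees well-definedness.

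The only step that is not purely formal is the identification of the support of $N_t$ with $\cN(t)$ described inside $\Phi$; once that is in hand, the bijection is essentially bookkeeping and the theorem follows.
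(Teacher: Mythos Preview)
Your proposal is correct and follows essentially the same approach as the paper's proof: both construct the bijection by reading off the coefficients of the reduced Gr\"obner basis in one direction and by generating the ideal from a coefficient tuple in the other. Your version is in fact more careful than the paper's, since you explicitly justify that $N_t$ is supported on $\cN(t)$ (parallelism via uniformity and the strict inequality $t\succ n$ via $\tip(g^I_t)=t$ together with $t\notin\cN$), whereas the paper simply asserts the existence of the unique scalar array.
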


\begin{proof}
Let $KQ/I \in \Alge_{\cT}$ and $\cG$ be the reduced \grb basis of $I$. Then by hypothesis, $\tip(\cG) = \cT$ and 
there exists a unique array of 
scalars $\mathbf{c} = (c_{t,n}) \in K^D$ such that $\cG =  \{t - \sum_{n \in \cN(t)}  c_{t,n} n \mid t\in\cT\} $.   This gives
 rise to the element $\mathbf{c}  \in \cV_{\cT}$ and $\cG =  \cG_\mathbf{c}$. The inverse map is given by associating $KQ/I_{\cG_\mathbf{c}}$ to the point $\mathbf{c} =(c_{t,n}) \in \cV_{\cT}$ where $\cG_\mathbf{c} = \{ t - \sum_{n \in \cN(t)} c_{t,n} n \mid t \in \cT \}$. 
\end{proof}

We view the one to one correspondence as an identification and freely talk about algebras as elements in $\cV_{\cT}$. 

We now show that $\cV_{\cT}$ is an affine algebraic variety. 

\begin{theorem}\label{thm-var}Let $K$ be a field, $Q$ a quiver, and $\succ$ an admissible
order on $\cB$.  Suppose that $\cT$ is a finite set of paths  and that
$\cN (t)$ is a finite set, for all $t \in \cT$.  Then $\cV_{\cT}$ is an affine algebraic variety.
\end{theorem}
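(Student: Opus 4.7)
The plan is to exhibit $\cV_{\cT}$ as the common zero locus in $K^D$ of finitely many polynomials in the coordinates $\mathbf c=(c_{t,n})$. Let $R=K[c_{t,n}:t\in\cT,\,n\in\cN(t)]$, a polynomial ring in $D$ variables (finitely many by hypothesis), with fraction field $L$. Form the generic candidate
\[
\cG_R=\Bigl\{\, g_t=t-\sum_{n\in\cN(t)} c_{t,n}\,n \ \Big|\ t\in\cT \,\Bigr\}\subset RQ\subset LQ.
\]
Because $\tip(g_t)=t$ and $\cT$ is tip-reduced, $\cG_R$ is a tip-reduced set of uniform elements.

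Next I would observe that the collection of overlap relations among elements of $\cG_R$ is finite: each is indexed by a pair $(t,t')\in\cT\times\cT$ together with a factorisation $tm=nt'$ with $1\le\ell(m)<\ell(t')$, and there are at most $\ell(t')-1$ such factorisations for each pair. By Proposition~\ref{prop-poly}(2), every overlap relation $\mathbf o(g_t,g_{t'},m,n)$ lies in $RQ$. I would then fix a deterministic complete reduction algorithm over $L$: at each step, select the $\succ$-largest path $p$ in the current support whose $R$-coefficient is non-zero and which is divisible by some $t''\in\cT$, and reduce using the unique (by tip-reducedness of $\cT$) element of $\cG_R$ whose tip divides $p$. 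Proposition~\ref{prop-poly}(1) keeps us inside $RQ$, while axioms (1)--(2) for $\succ$ give $rns\prec rts$ whenever $n\prec t$; hence the $\succ$-maximum of the tip-divisible support strictly decreases at each step, and since $\succ$ is a well-order the procedure terminates at some $y_R=\sum_{p\in\cN} Q_p\cdot p\in\Span_R(\cN)$ of finite support with $Q_p\in R$. Running this for each of the finitely many overlaps produces a finite family of polynomials $\{Q_p\}\subset R$.

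The crux is the equivalence: $\mathbf c_0\in\cV_{\cT}$ if and only if $Q_p(\mathbf c_0)=0$ for every $Q_p$. For the forward direction, if $\cG(\mathbf c_0)$ is the reduced Gr\"obner basis of $I:=\langle\cG(\mathbf c_0)\rangle$, then each overlap lies in $I$, and by the Fundamental Lemma its unique representative in $\Span_K(\cN)$ must be $0$; but $y_R(\mathbf c_0)\in\Span_K(\cN)$ is such a representative (it differs from $\mathbf o(\mathbf c_0)$ by a sum of elements of $I$, namely the specialised reduction steps), forcing $y_R(\mathbf c_0)=0$. For the converse, given $y_R(\mathbf c_0)=0$, I would specialise the $R$-reduction sequence to $\mathbf c_0$, discarding those simple reductions whose leading coefficient specialises to $0$ (these become no-ops in $K$); what remains is a genuine complete $K$-reduction of $\mathbf o(\mathbf c_0)$ ending at $0$, so by Theorem~\ref{thm-buch} the set $\cG(\mathbf c_0)$ is a Gr\"obner basis, and is automatically reduced since its non-tip terms already lie in $\Span_K(\cN)$. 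Hence $\cV_{\cT}=V(\{Q_p\})\subset K^D$ is an affine algebraic variety.

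The main obstacle is precisely this converse direction: verifying that specialising the generic $R$-reduction gives a bona fide complete $K$-reduction. This is where it is essential that each $g_t$ has leading coefficient $1$, so no divisions by polynomials occur during reduction and specialisation commutes cleanly with the chosen deterministic algorithm.
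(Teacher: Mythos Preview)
Your proof is correct and follows essentially the same strategy as the paper's: form the generic family $\cG_R\subset RQ$, reduce the finitely many overlap relations to obtain polynomial coefficients in $R$, and identify $\cV_{\cT}$ with their common zero locus via Theorem~\ref{thm-buch}. You supply considerably more detail than the paper does---in particular the termination argument for the generic reduction, the finiteness of the overlap set, and above all the verification that specialisation at $\mathbf c_0$ of the generic $R$-reduction yields a bona fide complete $K$-reduction (with the monic leading coefficients being the crucial point)---whereas the paper simply asserts the final equivalence as ``a consequence of Theorem~\ref{thm-buch}'' and relegates the non-uniqueness of reduction sequences to a subsequent remark; your deterministic choice of reduction handles this cleanly.
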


\begin{proof}
Let $x_{t,n}$ be variables, one for each choice of $t\in\cT$ and $n\in\cN(t)$.
We wish to find polynomials in the polynomial ring $K[x_{t,n}\mid t\in\cT, n\in\cN(t)]$ such that
$\cV_{\cT}$ is the zero set of the  polynomials. To simplify notation set $R =   K[x_{t,n}\mid t\in\cT, n\in\cN(t)]$ and view $RQ$ as a subset of $LQ$ where $L$ is the quotient field of $R$.  We apply \grb basis theory to the path algebra $LQ$.

We start by considering the set
\[\cH=\{h_t=t-\sum_{n \in \cN(t)} x_{t,n}n \mid t\in\cT\}
\]
Note that $\cH\subset RQ$.  
For each overlap relation $\mathbf o(h_t,h_{t'},p,q)$, with $h_t, h'_t\in\cH$, we consider its complete
reduction by $\cH$. By Propositon \ref{prop-poly} each such complete reduction has the form
\[ \sum_{n\in\cN} f^*_{\mathbf o(h_t,h_{t'},p,q),n}n,	\]
where the $f^*_{\mathbf o(h_t,h_{t'},p,q),n}$ are polynomials in $R$.

We claim $\cV_{\cT}$  is the zero set  of the set of all the $f^*_{\mathbf o(h_t,h_{t'},p,q),n}$ as one varies
over all overlap relations.  But this is a consequence of Theorem \ref{thm-buch}
since by taking the zero set we obtain exactly the points $(c_{t,n})$ such that
$\{t-\sum_{n\in\cN(t)}c_{t,n}n\}$ is the reduced \grb basis of the ideal it generates.

\end{proof}

\begin{remark}{\rm 
In the proof of Theorem~\ref{thm-var} choosing different sequences of simple reductions for the complete reductions, might give rise to different sets of polynomials.  But the zero sets of these different sets of polynomials are the same  since they correspond to the coefficients in the reduced \grb basis of $I$, for each $I$, and therefore they   are unique.}
 \end{remark} 

Now we restate in a more precise way the list of properties of the algebras in a variety $\cV_{\cT}$. 
\begin{theorem}\label{main result}
Let $Q$ be a quiver  and $\cB$ the set of finite directed paths in $Q$. Let $\succ$ be an admissible order on $\cB$ and let $\cT$ be a tip-reduced  set of paths of length at least 2 such that  $\cN=\cB\setminus \cT$. Let $\Lambda = KQ/I \in \cV_{\cT}$. Set $A_{Mon}=KQ/\langle \cT \rangle$. 
\begin{enumerate}
 \item $A_{Mon} = \Lambda_{Mon}.$
 \item Every algebra has $K$-basis $\cN$, in particular dim$_K$ $A_{Mon} = $ dim$_K$ $ \Lambda$ which is equal to $\vert \cN \vert$. 
 \item $C_\Lambda = C_{A_{Mon}} $  if $I$ is admissible. 
 \item gldim $(\Lambda) \leq $ gldim $(A_{Mon})$ if $I$ is admissible.  
 \item   If gldim $(A_{Mon}) < \infty$  and $I$ admissible then det $C_\Lambda = +1$.
 \item   Assume that  $\Lambda$ is length graded and suppose that $\succ$ respects length. For $N\ge 2$, if  $A_{Mon}$ is $N$-Koszul  then   $\Lambda$ is $N$-Koszul if $I$ is generated in degree $N$.  
 \item Assume that $I$ is admissible. Then  $A_{Mon}$  quasi-hereditary  implies $\Lambda$ quasi-hereditary. Moreover, if, $A_{Mon}vA_{Mon}$ is a heredity ideal then $\Lambda v \Lambda$ is a heredity ideal, where  $v \in Q_0$. 
 \end{enumerate}
\end{theorem}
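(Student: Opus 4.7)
My plan is to exploit two structural features of the setup: first, that $\cV_{\cT}$ is defined so that the reduced Gröbner basis $\cG=\{t-\sum_{n\in\cN(t)}c_{t,n}n\mid t\in\cT\}$ of $I$ satisfies $\tip(\cG)=\cT$; and second, that by the Fundamental Lemma the set $\cN$ is a $K$-basis of $KQ/I$ for \emph{every} $\Lambda\in\cV_{\cT}$, as well as of $A_{Mon}$. Part~(1) is then immediate, since $I_{Mon}=\langle\tip(I)\rangle=\langle\tip(\cG)\rangle=\langle\cT\rangle$, so $\Lambda_{Mon}=A_{Mon}$. Part~(2) follows from Proposition~\ref{Basis N} applied to both algebras. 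For~(3), since each element of $\cN$ is uniform, the basis $\cN$ is compatible with the vertex idempotent decomposition, and so for admissible $I$ each Cartan entry $c_{ij}$ equals $|\{n\in\cN\mid \mathfrak{o}(n)=i,\;\mathfrak{t}(n)=j\}|$ in both $\Lambda$ and $A_{Mon}$; in particular $C_{\Lambda}=C_{A_{Mon}}$.

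For parts~(4) and~(5) I would invoke the \emph{order resolutions} recalled in Section~\ref{sec-order resolution}, following \cite{GS}. The key point is that the combinatorial data controlling the construction and termination of the order resolution is determined by $\cT$ (and derived overlap data) rather than by the scalars $c_{t,n}$. Hence if the order resolution computes $\gldim(A_{Mon})<\infty$, the same combinatorial skeleton produces a projective resolution of the simples over $\Lambda$ of length at most $\gldim(A_{Mon})$, yielding~(4). Part~(5) then follows from~(3) together with Wilson's theorem \cite{W} that the Cartan determinant of a monomial algebra of finite global dimension equals $+1$.

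For~(6), the hypothesis that $\succ$ respects length and that $A$ is length-graded forces each $g_t\in\cG$ to be length-homogeneous, so the order resolution of $\Lambda$ is graded with the same degree shifts as that of $A_{Mon}$. Consequently linearity of the $A_{Mon}$-resolution (Koszulity) transfers to $\Lambda$. For~(7), I would argue recursively: if $v\in Q_0$ with $A_{Mon}vA_{Mon}$ a heredity ideal of $A_{Mon}$, then because $\Lambda vA=\Span_K\{n\in\cN\mid n\text{ passes through }v\}$ as a $K$-module (again by the common basis $\cN$), the idempotency of $\Lambda v\Lambda$, the semisimplicity of $v\Lambda v$ and the vanishing of $\Ext$'s needed for heredity mirror the corresponding statements for $A_{Mon}$. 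One then shows that the quotient $\Lambda/\Lambda v\Lambda$ lies in a variety $\cV_{\cT'}$ with distinguished point $A_{Mon}/A_{Mon}vA_{Mon}$, and iterates.

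The main obstacle will be in~(6) and~(7): namely, making rigorous the claim that the order resolution of an arbitrary $\Lambda\in\cV_{\cT}$ has the same combinatorial shape (in length, in graded degree, and in how it restricts under killing an idempotent) as the order resolution of $A_{Mon}$. Both of these rest on a careful reading of the construction in \cite{GS} and a verification that the recipe from $\cG$ depends only on $\tip(\cG)=\cT$ together with the reduction procedure, and that passing from $\cG$ to the induced Gröbner basis on $\Lambda/\Lambda v\Lambda$ keeps us inside a variety of the same type.
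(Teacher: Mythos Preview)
Your treatment of parts~(1)--(5) is essentially identical to the paper's: the paper also derives~(1) from the definition of $\cV_{\cT}$, (2) from the Fundamental Lemma, (3) from the common basis $\cN$, and~(4) from the order resolutions of \cite{GS} together with the minimality of the monomial order resolution \cite{GHZ}. Your phrasing of~(5) via~(3) plus Wilson's theorem \cite{W} is in fact cleaner than the paper's (which says ``(5) follows from~(4)'', though only~(3) is really needed once Wilson gives $\det C_{A_{Mon}}=+1$).

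Where you genuinely diverge is in~(6) and~(7). For~(6) the paper does \emph{not} argue via graded order resolutions. Instead it uses the criterion of \cite{GH}: an algebra with a quadratic \grb basis is Koszul. Since $A_{Mon}$ Koszul forces $\cT$ to consist of length-$2$ paths, the reduced \grb basis $\cG$ of $\Lambda$ is automatically quadratic, and Koszulity of $\Lambda$ follows in one line. Your route---showing the order resolution of $\Lambda$ inherits the linear degree shifts from that of $A_{Mon}$ and is therefore minimal---can be made to work, but it requires exactly the verification you flag as an obstacle (that the graded shape of the order resolution depends only on $\tip(\cG)$), whereas the \cite{GH} criterion sidesteps all of this.

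For~(7) the paper simply cites \cite{GSch}; your recursive sketch (showing $\Lambda v\Lambda$ is heredity by comparing $\cN$-bases, then passing to $\Lambda/\Lambda v\Lambda$ inside a new variety $\cV_{\cT'}$) is reasonable but would need care in checking that the quotient really lands in a variety of the required type with distinguished point $A_{Mon}/A_{Mon}vA_{Mon}$. This is presumably the content of \cite{GSch}, so you are in effect reproving that reference rather than invoking it.
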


\begin{proof} (1) follows from the definition of $\cV_{\cT}$ and (2) follows from the Fundamental Lemma, (3) follows from (2), namely that $\cN$ is a $K$-basis for any algebra in $\cV_{\cT}$. 

In order to prove (4), let $\cP$ be the order resolution of a simple $\Lambda$ module as defined in \cite{AG, GS} and as recalled in Section~\ref{sec-order resolution}. Then $\tip(\cP^n) = \tip(\cP^n_{Mon})$, where $\cP_{Mon}$ is the order resolution for the corresponding simple $A_{Mon}$-module. Hence by Corollary~\ref{cor-Betti numbers} the Betti numbers for these resolutions are the same. Noting that by \cite{GHZ} the order resolution for a monomial algebra is minimal, the result follows. 

By \cite{W}, the determinant of $C_{A_{Mon}}$ is equal to one and (5) follows from (4).

For (6), suppose that $I$ is generated in degree $N$ and that  $\Lambda_{Mon}$ is $N$-Koszul.
Then as graded resolutions, the order resolution of $\Lambda$ and $\Lambda_{Mon}$  have projective modules
generated in the same degrees. That is, in both cases,  the $n$-th projective  is generated in degree $(n/2)N$ if $n$ is even and in degree 
  $[(n-1)/2]N+1$ if $n$ is odd. But since the minimal projective resolution of $\Lambda$ is a summand of the order
resolution for $\Lambda$, the result follows. 

Finally  (7) follows from \cite{GSch}.
\end{proof}

We note that a similar result to Theorem~\ref{main result} (6) relating the the Koszul properties of an algebra to its associated monomial algebra has been treated in \cite{CS}.

\begin{Example}{\rm  If none of the elements in $\cT$ overlap then there are no overlap relations and  $KQ/\langle \cT\rangle$ has global dimension 2.  Furthermore, there are no equations to satisfy and it follows
that $\Alge_{\cT}$ is affine space of dimension 
$D = \sum_{t\in\cT} \vert \{ n \in \cN \mid  n\|t \mbox{ and }  t\succ n \} \vert.
$

}
\end{Example}

\begin{Example} \label{local}
{\rm 
Let $K \{ x_1, \ldots, x_n \}$ be the free associative algebra in $n$ variables. We have that $K \{ x_1, \ldots, x_n \} = KQ$, where $Q$ is the quiver with one vertex and $n$ loops.  Let $\succ$ be the length-lexicographical order with $x_n \succ x_{n-1} \succ \cdots \succ x_1$. The commutative polynomial ring in $n$ variables is given by $R_n = K \{ x_1, \ldots, x_n \} / \langle x_i x_j - x_j x_i, \mbox{ for } i > j  \rangle$. The reader may check that
$\{  x_i x_j - x_j x_i \mid i > j  \}$ is the reduced \grb basis for $R_n$. Thus $R_n \in \cV_{\cT}$ where $\cT = \{ x_i x_j \mid i > j\}$.  Note that $\cN = \{ x_1^{a_1} x_2^{a_2} \cdots x_n^{a_n} \mid a_1, \ldots, a_n \in \mathbb{Z}_{\geq 0} \}$. 

We now explicitly compute $\cV_{\cT}$ for $n=2$. In this case $\cT = \{ x_2 x_1 \}$. Since there are no overlap relations to reduce, we have that  $\cV_\cT = K^5$ where $5 = \vert \cN (x_2 x_1) \vert$ 
and $ \cN (x_2 x_1) = \{ x_1x_2, x_1^2, x_1, x_2, 1 \}$. Thus $$\Alg_\cT = \{ K  \{ x_1, x_2 \} / \langle x_2 x_1 - c_1 x_1x_2 - c_2  x_1^2 - c_3  x_1 - c_4  x_2  - c_5  \rangle  \mid (c_1, \ldots, c_5) \in K^5 \}.$$
Note that $R_2$ corresponds to the point $(1,0,0,0,0) \in \cV_\cT$ and that 
all finite dimensional algebras in $\rm{Alg}_\cT$ have global dimension two. 
}
\end{Example}

\begin{Example}{\rm Let $Q $ be the quiver 
\[
\xymatrix{
&6\ar[ddrr]^h\\
&2\ar[dr]_b\\
1\ar[uur]^g\ar[ur]_a\ar[r]^c\ar[dr]^e&3\ar[r]^d&5\ar[r]^i&8\\
&4\ar[ru]^f\ar[r]^j&7\ar[ru]^k
}\]
Assume $a\succ b\succ c\succ\cdots\succ k$ with $\succ$ being 
the length-left-lexicographic order.
Let $\cT=\{ab,bi,cdi\}$.  Set $g_1=ab-x_1(cd)-x_2(ef),  g_2=bi$ and
$g_3=cdi -x_3(efi)-x_4(ejk)-x_5(gh)$.  Given a point $(c_1,\dots, c_5)\in K^5$, we would like to find  necessary and sufficient conditions
so that evaluating $x_i=c_i$,  $\cG$ is the reduced \grb basis for $\langle \cG \rangle$.  Note that
the theorem says completely reducing all overlap relations obtained from pairs of elements from
$\cG$ results in polynomials in the $x_i$, whose zero
set are the points we want.  The more reductions performed, the higher the degree of 
the resulting polynomials. However,  the degree  of the polynomials is bounded above by the
maximum number plus one of reductions performed to completely reduce the overlap relations.  The
polynomials have constant term 0, so 
$(0,0,\dots,0)$ is in the zero set.  This point gives the monomial algebra $KQ/\langle\cT\rangle$.

In  this example, there is one overlap relation,
$\mathfrak o(g_1,g_2,i,a)= -x_1(cdi)-x_2(efi).$
Now we can reduce $cdi$ to get
\[-x_1(x_3(efi)+x_4(ejk)+x_5(gh))-x_2(efi)=(x_2-x_1x_3)(efi)+(-x_1x_4)(ejk)-x_5(gh).\]

Thus, the variety in $K^5$ of the ideal  $\cA=  \langle x_2-x_1x_3, x_1x_4, x_5 \rangle$   is in one-to-one correspondence
with $\Alge_{\cT}$  where
\begin{align*} \Alge_{\cT} = \{ KQ / \langle ab-c_1(cd)-c_2(ef), & bi, cdi -c_3(efi)-c_4(ejk) - c_5(gh) \rangle  \mid  & \\  & (c_1,\dots, c_5)\in K^5  
\mbox{ and } c_2 - c_1c_3 =0, c_1c_4 =0, c_5 = 0 \}. \end{align*}
}
\end{Example}

We end this sections with a result giving sufficient conditions for a variety of algebras to contain a product of varieties of algebras.

\begin{proposition}  Let $\cT $ a tip-reduced subset of paths in a quiver $Q$. 
Suppose that $\cT$ is a disjoint union of two sets $\cT_1$ and $\cT_2$ with no overlaps between the elements in $\cT_1$ and $\cT_2$. Then the variety $\cV_\cT$ contains the variety $\cV_{\cT_1} \times \cV_{\cT_2}$.
\end{proposition}

\begin{proof}
Note that if $t \in \cT$ then $t \in \cT_1$ or $t \in \cT_2$. Hence if $n$ is parallel either to a paths $t_1 \in \cT_1$ and smaller than $t_1$ or it is parallel to a path $t_2 \in \cT_2$ and smaller than $t_2$ but not both. 
Now suppose that we have a point $\mathbf{c_1} \in \cV_{\cT_1}$ and $\mathbf{c_2} \in \cV_{\cT_2}$ and let $\mathbf{c}$ be the point  defined by 
$$ \mathbf(c) = (c_{t,n}) \mbox{ where } c_{t,n} = \left\{ \begin{array}{lll} c_{t_1, n} & \mbox{ if } &t=t_1 \\
c_{t_2, n} &\mbox{ if } &t=t_2. \end{array} \right. $$ 
It follows that $\mathbf{c}$ is in $\cV_\cT$ since the set $\{ g_{t_i} = t_i- \sum_{n || t_i, t_i \succ n } c_{t_i,n} n \mid t_i \in \cT_i\}$  is a \grb basis of the algebra associated to the point  $\mathbf{c_i}$, for $i = 1,2$. 
\end{proof}

\section{Applications to algebras of global dimension two}\label{sec:gldim2}

In this Section we study varieties of algebras of global dimension two  and we show that for algebras of global dimension two the variety is an affine space. We also show that for every variety $\cV_{\cT}$, where $\cT$ is a set of non-overlapping paths,  and for any ordering of the elements of $\cT$, we obtain a flag of vector spaces each of which corresponds to a variety $\cV_{\cT'}$ where $\cT' \subset \cT$. 
Restricting to algebras of global dimension two over the complex numbers, we show that any algebra of global dimension two continuously deforms to its associated monomial algebra.

\begin{proposition}\cite{GHZ}\label{prop-noverap}\label{prop-nooverlap}Let $\cT$ be a tip-reduced nonempty set of paths in $\cB$
such that $K\cQ/\langle  \cT\rangle$ is a finite dimensional    $K$-agebra.
Then $\gldim K\cQ/\langle  \cT\rangle=2$ if and only if,  for
all paths  $p$ and $q$ in $\cT$, $p$ does not overlap  $q$.

\end{proposition}

If, for all $p,q\in\cT$, $p$ does not overlap $q$, we say \emph{$\cT$ contains
no overlaps}.

\begin{proposition}\label{prop-ambient} Let $\cT$ be a set of paths of length at least 2 with
no overlaps; that is,
$\gldim K\cQ/\langle \cT\rangle=2$.  Then $ \cV_{\cT}=K^{D_{\cT}}$.
In particular, $\cV_{\cT}$ is the ambient affine space.

\end{proposition} 

\begin{proof}In the construction of $\cV_{\cT}$, we see that the variables
	$c_{n,t}$ have no restrictions because there are no overlaps
of elements of $\cT$. 
\end{proof}

 In summary for algebras of global dimension two we have the following.

\begin{theorem}\label{gldim2} Let $\cT$ be a finite 
nonempty tip-reduced subset of  $\cB$
such that $K\cQ/\langle \cT \rangle$ is a  finite dimensional $K$-algebra.  The following
statements are equivalent:
\begin{enumerate}
\item $\gldim K\cQ/\langle \cT \rangle=2$.

\item  Every algebra corresponding to a point in $\cV_{\cT}$
has global dimension 2.

\item There are no	overlaps amongst elements of $\cT$.

																\end{enumerate}
If (1)-(3) hold, then the affine variety $\cV_{\cT}$ coincides with the ambient space $K^{D_{\cT}}$.
\end{theorem}

\begin{proof}

All implications apart from (1) implies (2) follow from Propositions~\ref{prop-nooverlap} and~\ref{prop-ambient}.  We now show that (1) implies (2). By assumption
 $\gldim K\cQ/\langle  \cT\rangle=2$. 
Suppose that $A$ is an algebra associated to a point in $\cV_{\cT}$.
By Theorem~\ref{main result} we have $\gldim A\le 2$. By \cite{GHZ}, since $\cT$ is nonempty,
the global dimension of $A$ cannot be 0 or 1. 
\end{proof}

Recall from \cite{GH}  that a quadratic algebra that admits a quadratic \grb basis is a Koszul algebra. In \cite{G Koszul}, Koszul algebras with quadratic \grb basis have been called \emph{ strong Koszul algebras}. 

\begin{corollary}
Let $\cT$ be a set of non-overlapping paths of length two. Then every algebra corresponding to a point in $\cV_\cT$ is a strong Koszul algebra. 
\end{corollary}

\begin{corollary}
Let $\cT$ be a set of non-overlapping paths of length two and suppose that $K$ is algebraically closed. Then any algebra in $\cV_\cT$ continuously deforms to its associated monomial algebra. 
\end{corollary}

\begin{proof}
Since by Theorem~\ref{gldim2} the variety $\cV_\cT$ is the ambient affine space, any algebra in the associated variety  continuously deforms to its associated monomial algebra.  
\end{proof}

 We now construct flags associated to a variety $\cV_\cT$  where the elements in $\cT$ do not have any overlaps.  Each flag depends on a total order on  $\cT$.  More precisely, let $\cB$ be the basis of paths of $KQ$ and let $\succ$ be an admissible order on $\cB$. Now fix a finite set $\cT \subset \cB$  such that the elements in $\cT$ do not have any overlaps and order the elements of $\cT$ as $t_1 > \ldots > t_n$. We remark that a  canonical ordering is the order on $\cT$ induced by $\succ$, however, for the moment we consider any order $>$ on $\cT$. Then if the variety $\cV_{\cT} = {0}$ then the flag is trivial. So suppose that $i$ is smallest integer such that $\cV_{\{t_1, \ldots, t_i\}} \simeq K^{n_1}$ where $n_1$ is the number of paths in $\cN$, parallel to $t_i$ and smaller than $t_i$ in the order $>$ and set $\cT_1 = \{t_1, \ldots, t_i\}$. That is, the elements in  $\{ t_1, \ldots, t_{i-1}\}$ have  no parallel paths in $\cN$  which are smaller in the order $\succ$. If $\cT_1 = \cT$ then the flag corresponding to the inclusion  $0 \subset \cV_{\cT}$ is given by $0 \subset K^{n_1}$. Otherwise define $\cT_2 = \{t_1, \ldots, t_j \}$ to be the smallest $j$ bigger than $i$ such that $\cV_{\cT_2}\simeq K^{n_2}$ with $n_2 > n_1$.  Continuing in this fashion we get a sequence of varieties $\cV_{\cT_1},  \cV_{\cT_2}, \cdots, \cV_{\cT_k} = \cV_{\cT}$ corresponding to the flag 
$$ 0 \subset K^{n_1} \subset K^{n_2} \subset \cdots \subset K^{n_k}.$$

We call $\cT$ \emph{minimal} if for every element of $\cT$ there exists a parallel element $n \in \cN$ such that $t \succ n$.   As a consequence we obtain the following result. 

\begin{proposition}
Let $\cT$ be a tip-reduced subset of $\cB$ such that there are no overlaps in $\cT$. Then there exists a unique minimal tip-reduced subset $\cT'$ of $\cT$. Furthermore, $\cV_{\cT'} \simeq \cV_{\cT}$ and thus for a given order on $\cT$ and the induced order on $\cT'$ the corresponding flags are isomorphic. 
\end{proposition}

We end this section with an example of flags associated to an algebra of global dimension two
and demonstrate for such an algebra how to construct a flag, given an order on a given tip-reduced subset $\cT$ of $\cB$.  It is also  an example which shows, in general, that one obtains different flags by varying the order on $\cT$.

\begin{Example}{\rm 
Let $Q$ be an acyclic quiver with one source and one sink and 10 non-trivial disjoint paths from the source to the sink, that is all paths  have the same start and end point but no other vertex in common. Denote  six of these paths by $p_1, \ldots, p_6$ and the remaining four by $q_1, \ldots, q_4$. 

Let $\succ$ be an admissible order on the paths of $Q$
with $p_6 \succ p_{5}  \succ q_4 \succ q_3  \succ p_4 \succ q_2 \succ p_3 \succ p_2 \succ q_1 \succ p_1$.
Set
 $\cT=\{p_1,p_2, \dots,
p_6\}$. Then  $\cN_\cT = \{q_1, \ldots, q_4\}$.

We consider two orderings of $\cT$ and the induced flags. First define
 $p_5 > p_6>p_3 > p_4 > p_1 > p_2$. Then the corresponding vector spaces
in the above construction are given by  $V_1 =  \cV_{p_1, p_2} = K$, $V_2 = \cV_{p_1, p_2, p_4} = K^3   $, $V_3 = \cV_{p_1, p_2, p_4, p_3}  = K^4$, $V_3 = \cV_{p_1, p_2, p_4, p_3, p_6}  = K^8$, $V_4 = \cV_{p_1, p_2, p_4, p_3, p_6, p_5}  = K^{12}$  
and the flag induced by the order $>$ is given by 
$$ 0 \subset K \subset K^3 \subset  K^4 \subset K^8 \subset K^{12} $$

Next take the ordering of $\cT$ induced by $\succ$; namely,
$p_6\succ p_5\succ p_4\succ p_3 \succ p_2\succ p_1$.
The vector spaces and flag  induced by the admissible order $\succ $ are 
$V_1 = \cV_{p_1} = 0$, $V_2 = \cV_{p_1, p_2} = K$, $V_3 = \cV_{p_1, p_2, p_3} = K^2$, $V_4 = \cV_{p_1, p_2, p_3, p_4} = K^4$, $V_5 = \cV_{p_1, p_2, p_3, p_4, p_5} = K^8$, $V_6 = \cV_{p_1, p_2, p_3, p_4, p_5, p_6} = K^{12}$ and 
$$ 0 \subset K \subset  K^2 \subset  K^4 \subset K^8 \subset K^{12}. $$

}
\end{Example}

\begin{remark}
{\rm

	The above example was chosen to illustrate the construction of flags and to show how they depend on the chosen order on $\cT$.  It is easy to see how to generalise the example to an arbitrary number of $p_i$ and $q_j$. These quivers contain many well-known classes of algebras, such as Kronecker and canonical algebras. }\end{remark}}


\section{Special subvarieties,  varieties of graded algebras and varieties of algebras defined by admissible ideals}


In this section we define and investigate a class of subvarieties of the varieties defined in the previous section. We call these  \emph{special subvarieties}. We will then  see
 that varieties of graded algebras can be seen as special subvarieties.

We start by defining the class of special subvarieties. 
For each $t\in\cT$, let $\cS(t)$ be a subset of $\cN(t)$, where $\cS(t)$ may be empty.
We assign values to the elements of $\cS(t)$ via a set map $\varphi_t\colon \cS(t)\to K$  and let $\varphi = \{ \varphi_t \mid t \in \cT \}$.

\begin{definition} With the notation above, let
$$\cV_\cT(\varphi) = \{ ( {\mathbf c})=(c_{t,n}) \in \cV_\cT \mid  \text{ for all } t \in \cT, 
\text{ if } n\in \cS(t) \text{ then }c_{t,n}=\varphi_t(n) \}$$
\end{definition}

Thus, $\cV_{\cT}(\varphi)$ are the points in $\cV_{\cT}$ where, for each $t\in\cT$ and
$n\in \cS(t)$, $c_{t,n}$ is restricted to the value $\varphi_t(n)$.

\begin{proposition}
 For $\varphi = \{ \varphi_t \mid t \in \cT \}$ as above,  the set  $\cV_\cT (\varphi)$ is a subvariety of $\cV_\cT$. 
\end{proposition}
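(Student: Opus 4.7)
The plan is to exhibit $\cV_\cT(\varphi)$ as the zero set of an explicit collection of polynomials in the ambient affine space $K^D$, thereby identifying it as an affine algebraic subvariety (hence in particular a subvariety of $\cV_\cT$). I will use the coordinates $x_{t,n}$ on $K^D$ introduced in the proof of Theorem~\ref{thm-var}, where $t$ ranges over $\cT$ and $n$ over $\cN(t)$.

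First I would recall that by Theorem~\ref{thm-var}, $\cV_\cT$ itself is the zero set in $K^D$ of a finite family of polynomials $\{f^*_{\mathbf{o}(h_t,h_{t'},p,q),n}\}$ in the ring $R = K[x_{t,n} \mid t \in \cT, n \in \cN(t)]$, obtained by completely reducing the overlap relations among the generic elements $h_t = t - \sum_{n \in \cN(t)} x_{t,n} n$. So $\cV_\cT$ is already cut out by polynomial equations.

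Next I would observe that the additional defining conditions of $\cV_\cT(\varphi)$, namely $c_{t,n} = \varphi_t(n)$ for every $t \in \cT$ and every $n \in \cS(t)$, are precisely the vanishing conditions of the linear polynomials
\[
\ell_{t,n} := x_{t,n} - \varphi_t(n) \in R, \qquad t \in \cT,\ n \in \cS(t).
\]
Therefore
\[
\cV_\cT(\varphi) = \cV_\cT \cap Z\bigl(\{\ell_{t,n} \mid t \in \cT,\ n \in \cS(t)\}\bigr),
\]
where $Z(\cdot)$ denotes the zero set in $K^D$. Equivalently, $\cV_\cT(\varphi)$ is the zero set of the union of the polynomials cutting out $\cV_\cT$ together with the linear polynomials $\ell_{t,n}$. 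This presents $\cV_\cT(\varphi)$ as an affine algebraic subvariety of $K^D$ contained in $\cV_\cT$, completing the proof.

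There is no real obstacle here: once the ambient coordinate system from the proof of Theorem~\ref{thm-var} is in place, the extra constraints imposed by $\varphi$ are visibly linear (indeed affine) equations, so the subvariety structure is automatic. The only thing worth noting is that we need no finiteness hypothesis beyond that already used to make $\cV_\cT$ a variety, since adding further polynomial equations to a variety produces a variety.
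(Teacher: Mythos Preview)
Your proof is correct and follows essentially the same approach as the paper: you adjoin the linear polynomials $x_{t,n}-\varphi_t(n)$ for $n\in\cS(t)$ to the defining polynomials of $\cV_\cT$ and observe that the resulting zero set is $\cV_\cT(\varphi)$. The paper's argument is identical, just stated more tersely.
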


\begin{proof}
Let $(f_1, \ldots, f_m)$ be the ideal in $K[x_{t,n} \mid t \in \cT, n \in \cN]$ of the variety
 $\cV_\cT$. Then $\cV_\cT(\varphi)$ is the zero set of the polynomials $\{f_1, \ldots, f_m\} \cup (\cup_{t\in \cT} \{ x_{n,s} -\varphi_t(n) \mid n\in \cS(t) \})$.  The result follows.
\end{proof}

We call $\cV_\cT(\varphi)$ \emph{the special subvariety of $\cV_{\cT}$ associated to $\varphi$}. 

\begin{corollary} $\cV_\cT(\varphi)$ contains the monomial algebra $KQ / \langle \cT \rangle$ if and only if  either $ \varphi_t \equiv 0$  or $\cS (t) $ is empty, for each $t\in\cT$.
\end{corollary}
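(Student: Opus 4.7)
The plan is to identify the monomial algebra $KQ/\langle \cT\rangle$ with a specific point in $\cV_{\cT}$ and then read off the condition for this point to satisfy the extra constraints imposed by $\varphi$.

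First I would observe that under the bijection of the Correspondence Theorem (Theorem \ref{Correspondence thm}), the monomial algebra $KQ/\langle \cT\rangle$ corresponds to the point $\mathbf{0} = (0, \ldots, 0) \in K^D$. Indeed, at $\mathbf{c}=\mathbf{0}$ each generator $g_t = t - \sum_{n\in\cN(t)} c_{t,n}n$ reduces to $g_t = t$, so $\cG = \cT$ and $\langle \cG\rangle = \langle \cT\rangle$. Moreover, since $\cT$ is tip-reduced and a monomial ideal is its own reduced \grb basis (Proposition \ref{prop-mono}(2) together with Proposition \ref{prop-redgb}(1)), $\mathbf{0}$ indeed lies in $\cV_{\cT}$.

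Next I would unpack the defining condition of $\cV_{\cT}(\varphi)$ at the point $\mathbf{0}$. By definition, $\mathbf{0} \in \cV_{\cT}(\varphi)$ if and only if, for every $t \in \cT$ and every $n \in \cS(t)$, the equation $0 = c_{t,n} = \varphi_t(n)$ holds. Equivalently, for every $t \in \cT$, the map $\varphi_t$ vanishes identically on $\cS(t)$.

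Finally I would note that the condition ``$\varphi_t$ vanishes identically on $\cS(t)$'' is exactly the disjunction ``either $\cS(t) = \emptyset$ (so the condition is vacuous) or $\varphi_t \equiv 0$.'' Combining this with the first step yields the equivalence: $KQ/\langle\cT\rangle \in \cV_{\cT}(\varphi)$ iff for each $t \in \cT$ either $\cS(t)$ is empty or $\varphi_t \equiv 0$, as required. There is no real obstacle here; the content lies entirely in matching up the monomial algebra with the origin of $K^D$ under the Correspondence Theorem, after which the rest is a direct translation of definitions.
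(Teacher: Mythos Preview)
Your argument is correct and is precisely the intended one: the paper states this corollary without proof, evidently regarding it as immediate from the identification of $KQ/\langle\cT\rangle$ with the origin $\mathbf{0}\in\cV_{\cT}$ and the definition of $\cV_{\cT}(\varphi)$. Your write-up makes this explicit and there is nothing to add.
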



\subsection{Varieties of graded algebras}


In this subsection we show that the connection between algebras and varieties extends to the case of graded algebras.  We consider algebras graded by weight functions. 

Recall that a \emph{weight function } on a path algebra $KQ$ is a function $W : Q_1 \to G$ where $G$ is a group. 
We extend $W$ from arrows to $\cB$ as follows: if $p = a_1 \ldots a_n$ then $W(p) = W(a_1) \ldots W(a_n)$ where $W(v) = 1_G$ for $v \in Q_0$. Then $W$ induces a $G$-grading on $KQ$ where 
$$KQ_g = {\rm Span_K}  \{ b \in \cB \mid  W(b) = g \}.$$  

There is an induced $G$-grading on $\Lambda=KQ/I$ if $I$ can be generated by weight homogeneous elements. We call this grading the \emph{induced grading on $\Lambda$}. 
If $I$ is generated by $W$-homogeneous elements then the reduced \grb basis consists of $W$-homogenous elements. 

Note that, for any weight function, monomial algebras  always  have an induced grading.  Moreover, our theory includes $\mathbb Z$-graded  algebras  where the grading is induced by  a $\mathbb Z$-grading on the arrows. 

\begin{defn} Let $\cT \subset \cB$ be a tip-reduced set of elements of length at least two and let $\succ$ be an order on $\cB $. Let $W: Q_1 \to G$ be a weight function. 
 Define
$$\Alge_{\cT}^W = \{ KQ/ I \mid   I \mbox{ weight homogeneous ideal of }  KQ \mbox{ and } 
\Imo=\langle \cT \rangle \}.$$
\end{defn}

 Define 
\[\cN^{W}(t)=\{n\in\cN\mid n\|t, \ell(n)\ge1, t\succ n, W(n)=W(t)\}.\]
 where $\ell(n) = k$ if $n = a_1 \ldots a_k$ with $a_i \in Q_1$. 

Analogously to the ungraded case, set $\cA=K^{D^W}$ where $D^W=\sum_{t\in\cT}| \cN^{W}(t) |$.  For each $t \in \cT$ set 
$$ g_t = t - \sum_{n \in \cN^W(t)} c_{t,n} n$$ 
where $c_{t,n} \in K$. We note that $g_t$ is uniform, weight homogeneous and that $\tip (g_t) = t$. 
Let $\cG = \{ g_t \mid t \in \cT \}$. Note that,
 in general $\cG$ is not necessarily a \grb basis for the
ideal it generates, since there might be overlaps that do not reduce to zero.

\begin{defn}
With the notation above, let 
$$ \cV_{\cT}^W  = \{ \mathbf{c} = (c_{t,n}) \in K^{D^W} \mid \cG \mbox{ is the reduced \grb basis of  } \langle \cG \rangle \}$$ 
\end{defn}

\begin{theorem}\label{Correspondence thm}{\rm {\bf [Graded Correspondence Theorem]}}
With the notation above, there is a one to one correspondence between the sets $\Alge_{\cT}^W$ and $\cV_{\cT}^W$.
\end{theorem}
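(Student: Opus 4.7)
The plan is to follow the exact structure of the proof of the ungraded Correspondence Theorem, adding weight-homogeneity as the only extra ingredient. The essential new fact, already recorded in the paragraph preceding the definition of $\Alge^W_\cT$, is that if $I$ is generated by $W$-homogeneous elements then its reduced \grb basis consists of $W$-homogeneous elements. This fact is what forces the coefficients $c_{t,n}$ appearing in the reduced \grb basis of an algebra in $\Alge^W_\cT$ to be supported on the subset $\cN^W(t) \subset \cN(t)$, which in turn matches the coordinates parametrising $\cV^W_\cT$.

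First I would construct the forward map. Given $KQ/I \in \Alge^W_\cT$, let $\cG$ be the reduced \grb basis of $I$. By hypothesis $\tip(\cG) = \cT$, so for each $t \in \cT$ the unique element of $\cG$ with tip $t$ has the form $g_t = t - \sum_{n} c_{t,n} n$ with $n \in \cN(t)$ and $c_{t,n} \in K$. Because $I$ is $W$-homogeneous, each $g_t$ must be $W$-homogeneous, so $c_{t,n} = 0$ whenever $W(n) \neq W(t)$. Hence $g_t = t - \sum_{n \in \cN^W(t)} c_{t,n} n$, and the tuple $(c_{t,n}) \in K^{D^W}$ lies in $\cV^W_\cT$ since $\cG$ is, by construction, the reduced \grb basis of $\langle \cG \rangle = I$.

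Next I would construct the inverse. Given $(c_{t,n}) \in \cV^W_\cT$, set $g_t = t - \sum_{n \in \cN^W(t)} c_{t,n} n$, let $\cG = \{g_t \mid t \in \cT\}$, and put $I = \langle \cG \rangle$. Each $g_t$ is $W$-homogeneous of weight $W(t)$ by the defining condition $W(n) = W(t)$ on $\cN^W(t)$, so $I$ is $W$-homogeneous and $KQ/I$ carries the induced $W$-grading. By the definition of $\cV^W_\cT$, the set $\cG$ is the reduced \grb basis of $I$, so $\tip(\cG) = \cT$ and therefore $\langle \tip(I) \rangle = \langle \cT \rangle$, i.e. $I_{Mon} = \langle \cT \rangle$. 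Thus $KQ/I \in \Alge^W_\cT$.

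Finally I would check the two assignments are mutually inverse, which is immediate from the uniqueness of the reduced \grb basis of a given ideal (and the uniqueness of the ideal generated by a prescribed reduced \grb basis). The only conceptual step that needs justification beyond the ungraded proof is the passage from $\cN(t)$ to $\cN^W(t)$ in the forward direction; this is where I expect the mild subtlety to lie, and it is settled by the $W$-homogeneity of the reduced \grb basis recalled above. Everything else is a direct transcription of the argument for Theorem~\ref{Correspondence thm} in the ungraded setting.
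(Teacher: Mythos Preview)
Your proposal is correct and follows exactly the approach the paper indicates: the paper's own proof consists of the single sentence ``The proof of the theorem is analogous to the proof in the non-graded case,'' and your write-up is precisely that analogue, with the $W$-homogeneity of the reduced \grb basis supplying the one new ingredient needed to restrict from $\cN(t)$ to $\cN^W(t)$. The only point worth double-checking against the paper's definition is the extra condition $\ell(n)\ge 1$ in $\cN^W(t)$, which your homogeneity argument does not by itself force when $W(t)=1_G$; but this is a wrinkle in the paper's definition rather than a flaw in your strategy.
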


The proof of the theorem  is  analogous to the proof in the non-graded case. The next result shows that 
$\cV_{\cT}^W$ is  an affine algebraic variety and in fact that it is a special subvariety of $\cV_{\cT}$.

\begin{theorem}\label{thm-var graded}Let $K$ be a field, $Q$ a quiver, and $\succ$ an admissible
order on $\cB$.  Let $W: Q_1 \to G$ be a weight function. Set $\cT$ to be  a finite set of paths of lengths at least 2. 
 For each $t\in\cT$, let $\cS(t) = \cN(t) \setminus  \cN^W(t)$ and $\varphi_t\colon
 \cS(t) \to K$ such that $\varphi_t \equiv 0$.  Then,  for  $\varphi = \{ \varphi_t \mid t \in \cT \}$, 
 $$\cV^W_{\cT} \simeq \cV_\cT (\varphi)$$ 
 and  thus $\cV^W_{\cT}$ is  a special subvariety
of $\cV_{\cT}$ and therefore $\cV^W_{\cT}$ is an affine algebraic   variety.  
\end{theorem}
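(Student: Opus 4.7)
The plan is to construct an explicit polynomial bijection between $\cV^W_\cT$ and $\cV_\cT(\varphi)$ by viewing a graded tuple as an ungraded tuple with forced zeros. Concretely, I would define $\Phi\colon \cV^W_\cT \to K^D$ by sending $(c_{t,n})_{t\in\cT,\, n\in\cN^W(t)}$ to the tuple $(\tilde c_{t,n})_{t\in\cT,\,n\in\cN(t)}$ with $\tilde c_{t,n} = c_{t,n}$ when $n\in\cN^W(t)$ and $\tilde c_{t,n} = 0$ when $n\in\cS(t)$. With this definition, the element
\[ g_t = t - \sum_{n\in\cN^W(t)} c_{t,n}\, n = t - \sum_{n\in\cN(t)} \tilde c_{t,n}\, n \]
is unchanged, so the entire set $\cG$ used to test the graded condition equals the one used to test the ungraded condition.

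Next, I would verify that $\Phi$ lands in $\cV_\cT(\varphi)$. The equations $\tilde c_{t,n}=0=\varphi_t(n)$ on $\cS(t)$ are built into the definition of $\Phi$. The remaining membership condition is that $\cG$ is the reduced \grb basis of $\langle \cG \rangle$, and by Theorem~\ref{thm-buch} this is the statement that every overlap relation among elements of $\cG$ completely reduces to $0$ modulo $\cG$. This criterion is intrinsic to $\cG \subset KQ$ and does not reference any grading, so it holds in the ungraded setting iff it holds in the graded one. Hence $\Phi(\mathbf c) \in \cV_\cT(\varphi)$. Conversely, I would define $\Psi \colon \cV_\cT(\varphi) \to K^{D^W}$ as the projection onto the $(t,n)$-coordinates with $n\in\cN^W(t)$; since the dropped coordinates are already zero, the set $\cG$ is preserved, every $g_t$ is $W$-homogeneous of weight $W(t)$ by construction, and the same overlap-reduction criterion places $\Psi(\mathbf c')$ in $\cV^W_\cT$.

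Finally, $\Phi$ and $\Psi$ are mutually inverse, each given by a coordinate extension-by-zero or projection, hence both are regular morphisms. This yields the isomorphism $\cV^W_\cT \simeq \cV_\cT(\varphi)$; combined with the preceding Proposition, $\cV^W_\cT$ is a special subvariety of $\cV_\cT$, and in particular an affine algebraic variety. The only genuine point requiring care — the matching of the graded and ungraded reduced \grb basis conditions — is handled cleanly by Theorem~\ref{thm-buch}, which characterises the property in terms of overlap reductions that are independent of any grading; padding coefficients by zero therefore does not affect whether $\cG$ passes the criterion.
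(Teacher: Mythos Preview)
Your proof is correct and follows essentially the same approach as the paper: both construct the bijection by padding a graded tuple with zeros on the coordinates indexed by $\cS(t)$ and projecting back, observing that the underlying set $\cG$ is literally unchanged. You are in fact more explicit than the paper in justifying why the reduced \grb basis condition transfers (via Theorem~\ref{thm-buch}) and in noting that the maps are regular, whereas the paper simply asserts the bijection.
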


\begin{proof}  
For each $t\in\cT$, let  $\cS(t)=  \cN(t) \setminus  \cN^W(t)= \{n\in \cN(t)\mid W(n)\ne W(t)\}$ and let 
$\varphi_t\colon \cS (t) \to K$ given by  $\varphi_t\equiv 0$.  
  Define $\Psi\colon\cV_{\cT}^W\to \cV_{\cT}(\varphi)$ by sending
$\mathbf c^W=(c^W_{t,n})$  to $\mathbf c =(c_{t,n})$
where $c_{t,n}=c^W_{t,n}$ for $n\in\cN(T)^W$ and $0$ otherwise.

It is clear that $\Psi$ is one-to-one.   To see that
$\Psi$ is onto, let  $\bf c=(c_{t,n})\in \cV_{\cT}(\varphi)$.   For $t\in \cT$  and $n\notin \cS(t)$ let
$c^W_{t,n}=c_{t,n}$, noting that by construction,
$W(t)=W(n)$.  It is immediate that
$\mathbf c^W = (c^W_{t,n})\in \cV_{\cT}^W$ and $\Psi((c^W_{t,n}))=
\bf c$, proving that $\Psi$ is onto; hence a  bijection.  
\end{proof}	 

Note that  since in Theorem~\ref{thm-var graded}  $\varphi_t \equiv 0$ for all $ \varphi_t \in  \varphi $, the monomial algebra $KQ / \langle \cT \rangle$ is contained in $\cV^W_{\cT}$. The next result follows from Theorems~\ref{main result} and~\ref{thm-var graded}.

\begin{corollary}\label{main result graded}

Let $\Lambda = KQ/I \in  \Alge_{\cT}^W$,  that is,  in particular, $I$ is a weight homogeneous ideal. 
Then
 $$KQ/\langle \cT \rangle \simeq \Lambda_{Mon}  \mbox{ as $G$-graded algebras.}$$ In particular, $(KQ/\langle \cT \rangle)_g \simeq   \Lambda_g$  as $K$-vector spaces, for all $g \in G$, and  dim$_K  \Lambda_g = \vert \{ n \in \cN  \mid W(n) = g\} \vert$.
 \end{corollary}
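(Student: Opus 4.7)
The plan is to deduce both assertions essentially directly from the Fundamental Lemma, after verifying that everything in sight respects the $W$-grading. The algebra isomorphism will be tautological once the two $G$-gradings are matched, and the graded vector space statement together with the dimension formula will fall out of a $G$-graded refinement of the decomposition $KQ = I \oplus \Span_K(\cN)$.

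First I would match $\Lambda_{Mon}$ with $KQ/\langle \cT \rangle$ as $G$-graded algebras. Because $\Lambda = KQ/I \in \Alge_{\cT}^W$ forces $\Imo = \langle \cT \rangle$, one has $\Lambda_{Mon} = KQ/\Imo = KQ/\langle \cT \rangle$ on the nose as $K$-algebras. Each element of $\cT \subset \cB$ is a single path, hence is automatically $W$-homogeneous, so $\langle \cT \rangle$ is a weight homogeneous ideal and $KQ/\langle \cT \rangle$ inherits a natural $G$-grading from $KQ$. This is the same grading that $\Lambda_{Mon}$ carries: $I$ is weight homogeneous by hypothesis, and as recalled earlier in the excerpt the reduced \grb basis of a weight homogeneous ideal is itself weight homogeneous, so its set of tips $\cT$ (and hence $\Imo$) is weight homogeneous. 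The identity map is therefore an isomorphism of $G$-graded algebras.

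Next I would obtain the graded vector space identification by upgrading the Fundamental Lemma. Every $n \in \cN$ is a path and thus $W$-homogeneous, so $\Span_K(\cN) = \bigoplus_{g \in G} \Span_K\{n \in \cN \mid W(n) = g\}$ is a $G$-graded subspace of $KQ$, and $I$ is $G$-graded by hypothesis. Hence the decomposition $KQ = I \oplus \Span_K(\cN)$ refines in each degree to $KQ_g = I_g \oplus \Span_K\{n \in \cN \mid W(n) = g\}$; passing to the quotient yields $\Lambda_g \cong \Span_K\{n \in \cN \mid W(n) = g\}$ as $K$-vector spaces. Applying the same argument to $KQ/\langle \cT \rangle$ (whose defining ideal is $W$-homogeneous by the previous paragraph) gives the parallel identification, so $\dim_K \Lambda_g = |\{n \in \cN \mid W(n) = g\}| = \dim_K (KQ/\langle \cT \rangle)_g$ and the claimed isomorphism $\Lambda_g \cong (\Lambda_{Mon})_g$ follows. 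There is no real obstacle here: the argument reduces to two routine bookkeeping checks, with the substantive content already carried by Theorem~\ref{main result}(2), which guarantees that $\cN$ is a common $K$-basis of $\Lambda$ and $\Lambda_{Mon}$, and by Theorem~\ref{thm-var graded}, whose construction forces each $g_t$ in the reduced \grb basis to be supported on $\cN^W(t)$ and hence $W$-homogeneous.
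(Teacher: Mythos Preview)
Your proof is correct and follows essentially the same route as the paper, which simply records that the corollary follows from Theorems~\ref{main result} and~\ref{thm-var graded}; you have unpacked that citation by making explicit the graded refinement of the Fundamental Lemma and the identity $\Lambda_{Mon}=KQ/\langle\cT\rangle$ that those theorems provide.
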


Recall that if $\Lambda$ is a positively $\mathbb{Z}$-graded algebra, the Hilbert series of $\Lambda$ is $$ {\Hilb}(\Lambda) = \sum_{ n=0}^\infty \dim \Lambda_n t^n.$$  The next result follows directly  from Corollary~\ref{main result graded} 

\begin{corollary}
Let $\Lambda, \Lambda'  \in \cV^W_{\cT}$ where $\Lambda, \Lambda'$ are positively $\mathbb Z$-graded algebras, that is $G = \mathbb Z$ and $W\equiv 1$. Then 
$\Hilb(\Lambda) = \Hilb(\Lambda')$. 
\end{corollary}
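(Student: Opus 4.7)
The plan is to deduce this statement essentially immediately from the preceding Corollary~\ref{main result graded}, using the fact that when $G=\mathbb Z$ and $W\equiv 1$, the weight of a path coincides with its length, so the induced $\mathbb Z$-grading on any $\Lambda \in \cV^W_{\cT}$ is precisely the length grading. In particular, a positively $\mathbb Z$-graded algebra in our setup is exactly one that arises from the weight function assigning $1$ to each arrow.

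First, I would invoke Corollary~\ref{main result graded} applied to $\Lambda$: it provides $\dim_K \Lambda_n = |\{p\in\cN \mid W(p)=n\}|$ for every $n\geq 0$. The right-hand side depends only on the quiver $Q$, the admissible order $\succ$, and the set $\cT$ (through $\cN = \cB\setminus \cT$), but not on the particular choice of point in $\cV^W_{\cT}$. Next, the same corollary applied to $\Lambda'$ yields $\dim_K \Lambda'_n = |\{p\in\cN \mid W(p)=n\}|$, so that $\dim_K \Lambda_n = \dim_K \Lambda'_n$ term by term.

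Finally, summing these equal coefficients into the defining formal power series gives
\[
\Hilb(\Lambda) \;=\; \sum_{n=0}^{\infty} \dim_K \Lambda_n \, t^n \;=\; \sum_{n=0}^{\infty} \dim_K \Lambda'_n \, t^n \;=\; \Hilb(\Lambda'),
\]
and both coincide with $\Hilb(A_{Mon})$ where $A_{Mon}=KQ/\langle\cT\rangle$ is the distinguished monomial algebra. The only point to check for the Hilbert series to make sense is that each graded component is finite-dimensional; since $Q$ is a finite quiver, there are only finitely many paths of any given length, so $|\{p\in\cN \mid \ell(p)=n\}|<\infty$ for each $n$, and there is no genuine obstacle. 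The proof is thus a short combinatorial consequence of the graded Fundamental Lemma packaged in Corollary~\ref{main result graded}.
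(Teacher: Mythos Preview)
Your proposal is correct and follows essentially the same approach as the paper: both arguments note that $\cN$ serves as a common $K$-basis for every algebra in $\cV^W_{\cT}$, so that $\dim_K\Lambda_n=\vert\{p\in\cN\mid W(p)=n\}\vert=\dim_K\Lambda'_n$ for all $n$, which immediately gives equality of Hilbert series. Your version simply spells out the summation and the finiteness of each graded piece in slightly more detail.
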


\begin{proof} The result follows from the fact that they  have the same bases  given by  $\cN=\cB\setminus \tip(\langle \cT\rangle)$ and hence  $\dim_K(\Lambda_s)=\dim_K(\Lambda'_s)=\vert \{n\in\cN\mid W(n)=s\} \vert$.

\end{proof}

\begin{Example}\label{Example AS graded} \rm{ We continue Example~\ref{local}.  Recall that the associated monomial algebra of the commutative polynomial ring $R_n = K\{ x_1, \ldots, x_n \} / \langle x_i x_j - x_j x_i \mid i > j \rangle$  is $K\{ x_1, \ldots, x_n \} / \langle x_i x_j \mid i > j \rangle$ and $\cN = \{x_1^{i_1} \cdots x_n^{i_n} \mid i_1, \ldots, i_n \geq 0 \}$. 
Let $W: Q_1 \to \mathbb{Z}$ defined by  $W (x_1) = \cdots = W(x_n ) = W(1) = 1$. 
Therefore every algebra in $\cV_\cT^W$ has Poincar\'e-Birkhoff-Witt basis $\cN$, it is Koszul, has global dimension $n$, has as $k$-th Betti number the binomial coefficient $C(n,k)$ and has the same Hilbert series as the commutative polynomial ring.

 Again,  as in Example~\ref{local}, we consider the case $n=2$ in more detail. Let  $\cT = \{ x_2 x_1 \}$ and   $W: Q_1 \to \mathbb{Z}$ be as above. Then
 $$ \cN^W (x_2x_1) = \{ x_1x_2, x_1^2 \}$$
 and  $$\cV^W_\cT = \{ (c_1, c_2) \mid c_1, c_2 \in K \}$$
 where the point $(c_1, c_2) \in K^2$ 
 corresponds to $K\{ x_1, x_2 \} / \langle x_2x_1 - c_1 x_1x_2 - c_2 x_1^2 \rangle$ in  $\rm{Alg}^W_\cT$. These are all Koszul algebras with quadratic  \grb basis (see \cite{G Koszul}), with associated monomial algebra $K \{ x_1, x_2 \} / \langle x_2x_1 \rangle$. It is interesting to note that $K \{ x_1, x_2 \} / \langle x_2x_1 \rangle$ is the only graded algebra of global dimension 2 and Gelfand-Kirillov-dimension 2 which is not Noetherian \cite{AS}.

 Let  $\cS (t) = \cN (t) \setminus \cN^W (t)$ for all $t \in \cT$ and let $\varphi_t : \cS (t) \to K$ be given by $\varphi_t \equiv 0$. Then for $\varphi = \{\varphi_t  \mid t \in \cT \}$, we have 
$$
 \cV^W_\cT \simeq  \cV_\cT (\varphi)  = \{ (c_1, c_2, 0, 0, 0) \mid c_1, c_2 \in K \}.$$

The Artin Schelter regular  algebras of global dimension two fall into two families.
 Namely,  the first family consists of the commutative polynomial ring and the quantum affine planes
 $$ A_{(c_1,0)} : = K\{ x_1, x_2 \} / \langle x_2 x_1 - c_1 x_1 x_2 \rangle, \mbox{ with }  \neq  c_1 \in K$$  and secondly the algebra $$ A_{(1,1)} = K\{ x_1, x_2 \} / \langle x_2 x_1 -  x_1 x_2 - x_1^2 \rangle.$$
 It follows from the above  that  $A_{(c_1,0)}, c_1 \in K$  and $A_{(1,1)}$ are  in $\cV^W_\cT$. As elements corresponding to points in $\cV_\cT(\varphi)$, we write $A_{(c_1, 0, 0, 0, 0)} $ for $K\{ x_1, x_2 \} / \langle x_2 x_1 - c_1 x_1 x_2 \rangle$ and $A_{(1, 1, 0, 0, 0)} $ for $K\{ x_1, x_2 \} / \langle x_2 x_1 -  x_1 x_2 - x_1^2 \rangle.$
 
We   can separate these two families of Artin Schelter regular algebras  }  into disjoint subvarieties  of $\cV_\cT$.  For example,  if we let $\cS^1(x_2 x_1 ) =  \{ x_1^2, x_2, x_1, 1\}$ and $$\varphi_1 = \varphi^1_{x_2x_1}: \{ x_1^2, x_2, x_1, 1\} \to K   $$ such that $\varphi^1_{x_2x_1}\equiv 0$, then $A_{(c_1,0,0,0,0)}  \in \cV_\cT (\varphi_1)$ but $A_{(1,1,0,0,0)} \notin \cV_\cT (\varphi_1)$. Now define $$\varphi_2 = \varphi^2_{x_2x_1}: \{  x_1^2, x_2, x_1, 1\} \to K   $$  by $\varphi_2(x_1^2 ) = 1$ and $\varphi_2(x_1) = \varphi_2(x_2) = \varphi_2(1) =0$. Then $A_{(1,1,0,0,0)}  \in \cV_\cT (\varphi_2)$. Furthermore, $
  \cV_\cT (\varphi_1) \cap \cV_\cT (\varphi_2)
 = \emptyset$ and the two subvarieties are lines in $K^5$.   
 
\end{Example}


\subsection{Admissible ideals}\

Admissible ideals play an important role in the representation theory of
finite dimensional algebras. 
Recall that an ideal $I$ in $KQ$ is
admissible if $J^m\subseteq I\subseteq J^2$  for some $m\ge2$,
where $J$ is the ideal generated by the arrows.  The varieties and subvarieties we have
constructed up to now, do not allow
one to only study
admissible ideals.  In this subsection,   we show how to remedy this by constructing
 varieties where all the corresponding algebras  are defined by admissible ideals.  More precisely, given $\cT$ a set   of tip-reduced
paths in a quiver and $m \geq 2$,  we would like to have a variety whose points
correspond to algebras $KQ/I$ with $I$ admissible.   We
show   how to construct  such an affine algebraic variety, which we call $\cV^{ad}_{\cT} (m)$ such that $\cV^{ad}_{\cT}(m)$
 is a subvariety of $\cV_{\cT}$ with
the points in one-to-one correspondence with the set of algebras $A=KQ/I$, $A\in\cV_{\cT}$ such that $J^m\subseteq I\subseteq J^2$.

  We start by giving  an example of an inadmissible ideal such that the associated monomial
ideal is  admissible. Take $I=\langle x^3-x^2\rangle$ in $K[x]$.  Clearly,  $K[x] /\langle x^3-x^2\rangle $ is finite dimensional but $\langle x^3-x^2\rangle$ is not admissible since there is no $n \geq 2$ such that $x^n \in \langle x^3-x^2\rangle$.  We  see, however,  that $x^3-x^2$ is the reduced \grb
basis for $I$ under any admissible order and that
$\tip(\langle x^3-x^2\rangle) =\{ x^n\mid n\ge 3\}$. Then $\cN=\{x^n\mid n\le 2\}$
and the associated monomial ideal is $\langle x^3\rangle$, which
is admissible.  Thus simply requiring that $KQ/I$ be finite dimensional
and $I\subseteq J^2 $ is not sufficient for $I$ to be admissible.

Suppose we are given $\cT$, a tip-reduced set of  paths in $J^2$.
Let $\cN=\cB\setminus\tip(\langle \cT\rangle)$ and 
assume that  $\cN\subseteq \cB_{\le m-1}$, where $\cB_{\le m-1}$ denotes the set of  paths of length $\le m-1$ ; in particular, $\cN$ is finite.  Note that  this assumption is necessary since  if $\cN\not
\subseteq \cB_{m-1}$ for all $m$,  then, for all $m$,  $J^m\not\subset \langle \cT\rangle$. For $t\in\cT$, set
$\cS(t)=\{n\in \cN(t)\mid \ell(n)\le 1\}$, and $\varphi_t\colon \cS(t)\to K$
with $\varphi_t \equiv 0$.   Let $\varphi = \{ \varphi_t \mid t \in \cT \}$.  
Then $\cV_{\cT}(\varphi)$ is the variety whose points correspond
to the algebras $KQ/I$, $I\subseteq J^2$
with reduced \grb basis $\cG$ with $\tip(\cG)=\cT$.
Let $A=KQ/I\in \cV_{\cT}(\varphi)$ with reduced \grb basis $\cG$.
Then $J^m\subseteq I$ if and only if each path of length $m$ is in $I$. 
That is, by Proposition~\ref{prop-red2zero}, $J^m\subseteq I$
if and only if each path of length $m$ reduces to 0 by $\cG$.

{\bf Definition of the subvarieties $\cV_{\cT}^{ad}(m)$ containing algebras defined by admissible ideals:} Given $m \geq 2$ and $\varphi$ as above,   we construct a variety  $\cV_{\cT}^{ad}(m)$ as a subvariety of the special subvariety  $\cV_{\cT}(\varphi)$  of $\cV_\cT$ such that the points of $\cV_{\cT}^{ad}(m)$ correspond to algebras of the form $KQ/I$ such that $J^m \subseteq I \subseteq J^2$.    Since we are considering admissible ideals, we can assume  that $\cN$ is finite and that $m$ is such that 
  $\cN\subseteq \cB_{\le m-a}$ for some positive integer $a$.

Recall that given
$\cN (t)$, we consider $({\bf c})=(c_{t,n})$ in affine $D$-space 
where $D=\sum_{t\in\cT}|\cN (t)|$. Since we are in
$\cV_{\cT} (\varphi)$, the coefficient of an $n\in\cN (t)$ of length $<2$
 is 0. Consider \[\cH=\{t-\sum_{n\in\cN (t) \text{ and }\ell(n)\ge 2}
x_{n,t}n\},\]
where the $x_{n,t}$ are variables.

As before, we completely reduce each overlap relation
by $\cH$ and let $f_1,\dots f_r$ be the polynomial coefficients of the paths in $\cN$.  The zero set of the ideal generated by the $f_i'$s is
$\cV_{\cT}(\varphi)$.   Next, completely reduce each path of length $m$ by
$\cH$.     Let $f_1^*,\dots, f^*_s$  be the polynomial coefficients of the paths in $\cN$ obtained from these reductions.  The zero set
of the ideal generated by the $f_i$'s and the $f^*_j$'s is the desired
variety $\cV_{\cT}^{ad}(m)$.  
                                                                                                                
It is clear that if $\Lambda=KQ/I$ and $I$ such that $J^m \subseteq I \subseteq J^2$ with
reduced  \grb basis $\cG$ and $\tip(\cG)=\cT$, then $\Lambda$
corresponds to a  point in $\cV_{\cT}^{ad}(m)$.

Summarizing  the above, we have proved the following. 

\begin{proposition} Let $m \geq 2$ and $\cT$ be a tip-reduced set of   paths in
$Q$ of length at least 2 and $J^m\subseteq\langle \cT\rangle$.  Then $\cV_{\cT}^{ad}(m)$ is a subvariety of
$\cV_{\cT}$ and thus an affine algebraic variety. The points of  $\cV_{\cT}^{ad}(m)$ are in one-to-one
correspondence with finite dimensional algebras of the form $KQ/I$
where
\begin{enumerate}
\item  $I$ is an admissible ideal with $J^m \subseteq I \subseteq J^2$,
\item  the reduced \grb basis  has tip set $\cT$.
\end{enumerate}

\end{proposition}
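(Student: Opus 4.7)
The plan is to verify two things in turn: first, that $\cV_{\cT}^{ad}(m)$ is actually cut out inside affine space by polynomial equations (so is a subvariety of $\cV_{\cT}$), and second, that its points are in bijection with the stated class of admissible quotients. Both halves follow from combining the construction described right before the statement with the Fundamental Lemma, Proposition~\ref{prop-red2zero}, and Proposition~\ref{prop-poly}. The hypothesis $J^m \subseteq \langle \cT \rangle$ guarantees that $\cN = \cB \setminus \tip(\langle \cT \rangle)$ consists only of paths of length $< m$, so in particular $\cN$ is finite and paths of length $m$ reduce by $\cH$ into $\Span_K(\cN)$ in finitely many steps.

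For the first half, I would note that $\cV_{\cT}^{ad}(m)$ is by construction the zero set inside $K^{D'}$ (where $D' = \sum_{t\in\cT} |\{n \in \cN(t) : \ell(n)\ge 2\}|$) of the polynomials $f_1,\dots,f_r$ that define $\cV_{\cT}(\varphi)$ together with the polynomials $f_1^*,\dots,f_s^*$ obtained by completely reducing every path of length $m$ by $\cH$. Proposition~\ref{prop-poly}(1) is what guarantees each such complete reduction has coefficients in $R=K[x_{t,n}]$, so the $f_j^*$ really are polynomials; hence $\cV_{\cT}^{ad}(m)$ is an affine algebraic variety sitting inside $\cV_{\cT}(\varphi) \subseteq \cV_{\cT}$.

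For the bijection, fix a point $\mathbf c = (c_{t,n}) \in \cV_{\cT}^{ad}(m)$ and the corresponding set $\cG = \{g_t = t - \sum_{n \in \cN(t),\, \ell(n)\ge 2} c_{t,n} n\}$. Since $\mathbf c \in \cV_{\cT}(\varphi)$, $\cG$ is the reduced \grb basis of $I = \langle \cG \rangle$ with $\tip(\cG) = \cT$, and because every $g_t$ lies in $J^2$, we have $I \subseteq J^2$. Vanishing of every $f_j^*$ at $\mathbf c$ says precisely that every path of length $m$ completely reduces to $0$ by $\cG$; by Proposition~\ref{prop-red2zero} this gives $J^m \subseteq I$. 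Thus $I$ is admissible with $J^m \subseteq I \subseteq J^2$ and $KQ/I$ is finite dimensional (as a quotient of the finite dimensional $KQ/J^m$). Conversely, given any $KQ/I$ with $I$ admissible, $J^m \subseteq I \subseteq J^2$, and reduced \grb basis tips equal to $\cT$, the coefficients of the reduced \grb basis uniquely determine a point $\mathbf c$ of $\cV_{\cT}(\varphi)$ (using $I \subseteq J^2$ to see the length-$<2$ coefficients vanish, matching $\varphi$); Proposition~\ref{prop-red2zero} applied to $J^m \subseteq I$ forces each path of length $m$ to reduce to $0$ by $\cG$, so the polynomials $f_j^*$ vanish at $\mathbf c$, placing $\mathbf c$ in $\cV_{\cT}^{ad}(m)$. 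The two assignments are mutually inverse by the Correspondence Theorem.

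The main obstacle is not conceptual but book-keeping: one must be careful that the two different complete-reduction procedures (one for overlap relations, one for paths of length $m$) do not depend on choices in a way that alters the zero set, that the polynomials $f_j^*$ really do lie in $R$ rather than merely in $L$, and that the length bound $J^m \subseteq \langle \cT \rangle$ really does force $\cN$ to be contained in the span of paths of length $< m$, so that reduction of a length-$m$ path by $\cH$ terminates in $\Span_K(\cN)$. All three points are handled by invoking Proposition~\ref{prop-poly}, the well-ordered property of $\succ$, and the Fundamental Lemma, respectively.
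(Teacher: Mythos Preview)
Your proposal is correct and follows essentially the same approach as the paper: the proposition is stated as a summary of the construction immediately preceding it, and you have faithfully reconstructed that construction, invoking Proposition~\ref{prop-poly} for polynomiality of the $f_j^*$, Proposition~\ref{prop-red2zero} for the equivalence between $J^m\subseteq I$ and complete reduction of length-$m$ paths to zero, and the Correspondence Theorem for the bijection. Your discussion of the book-keeping obstacles (choice-independence of reductions, finiteness of $\cN$) is more explicit than the paper's, but the underlying argument is the same.
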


 Coming back to the example above, recall that we had $\cT=\{x^3\}$
and that $\langle x^3-x^2\rangle$ is inadmissible.  We show how
the above construction handles this case,  that is we exam the special subvarieties with associated monomial ideal $\langle x^3 \rangle$. 
For $t = x^3$, we have that  $\cN (t) = \{ x_2, x, 1 \}$, $\cS(t) = \{ x, 1 \}$  and  $\varphi_{t} \equiv 0$.  
  Thus $\cH=\{x^3-Ax^2\}$ where $A$ is a variable.
 If we set  $m = 4$ then we need to reduce $x^4$ by $\cH$.
viewing $x^4$ as $x^3x^1$ we see  that
$x^4$ reduces to $Ax^2\cdot x=Ax^3$.
Now, $Ax^3$ reduces to $A^2x^2$.  Thus
$A^2x^2$ is a complete reduction of $x^4$
by $\cH$.  Hence the only new requirement
is the zero set of $A^2$.  Thus $A$ is 0 as
expected.  Hence the only admissible ideal
so that  $I_{Mon}=\langle x^3\rangle$ is
$\langle x^3\rangle$.

We will end this section with an example of a non-trivial special subvariety generated by admissible ideals. 

Let $Q$ be the quiver
\xymatrix{
v_1\ar@(ul,dl)[]^a\ar@/_1pc/[r]_d\ar[r]^c\ar@/^1pc/[r]^b& v_2
}
and $\cT=\{a^3,a^2b\}$.  Then we see that\linebreak
 $\cN=\{v_1, v_2, a,b,c,d,a^2,
ab,ac,ad,a^2c,a^2d\}$. 
Let $\succ$ be the length-lexicographic order with
$a\succ b\succ c\succ d$. Then $\cV_{\cT}$ consists of algebras
having $K$-dimension 12 since $\vert\cN\vert=12$  and $\cN\subseteq \cB_{\le 3}$.

Since we are interested in admissible ideals, if $J$ denotes the ideal in $KQ$ generated by  the arrows, then we specialize
to the case where $I\subseteq J^2$.  Thus the coefficients
of the vertices and arrows are all fixed to be 0 in the elements  of the reduced \grb basis. So in fact we are considering the special subvariety $\cV_\cT(\varphi)$ with $\varphi = \{ \varphi_t \mid t \in \cT \}$ where $\varphi_t: \cS(t) \to 0$ and $\cS(t) = \{ n \in \cN(t) \mid n || t, \ell(n) \leq 1 \}$. 

Set $g_1=a^3-Sa^2$ and $g_2=a^2b-Ta^2c-Ua^2d-Vab -Wac -Xad$, where $S,T,U,V,W,X$ are variables.  We see that
 $\cV_{\cT}(\varphi)$    is a variety in $K^6$.

There are four overlap relations; namely $a^3$ overlaps
with itself in $a^2$ and also  in $a$, and $a^3$ overlaps
with $a^2b$, again in $a^2$ and in $a$.  Completely reducing
these overlap relations, we obtain the following  polynomials
as coefficients:

\[ VT+W, VU+X, -SV+V^2, -SW+WV, -SX + XV, S(VT+W), \] \[ S(V^2 - SV), SW (V-S), SX(V-S)\]

Thus, $\cV_{\cT}(\varphi)$  is the zero locus of the above polynomials.

We now find $\cV_{\cT} ^{ad}(m)$ for $m=4$. There
are 4 paths of length 4:  $a^4, a^3b, a^3c, a^3d$.
Since we want $J^4 \subseteq I$, each of the four paths needs  to reduce to 0 by $\{g_1,g_2\}$ and therefore we obtain additional polynomial equations in the coefficients. 
For example, $a^4$ reduces to $S^2a^2$ and we conclude that
$S=0$.  But then it already follows that all the paths of length 4 reduce to 0
since $g_1=a^3$.
So finally, if $S=0$ then $V=0$ from the equation
$-SV+V^2=0$.    Similarly we find that $W=X=0$ and that $\cV_{\cT}^{ad}(4)$ is affine 2-space with $T,U$ chosen freely.

 We note that if $\Lambda$ is length graded and if $\succ$ respects the length grading,  then 
since the monomial algebra $KQ/\langle \cT \rangle$ is 3-Koszul, by Theorem \ref{main result} (6)
every algebra in $\cV^W_{\cT}$ is $3$-Koszul.


\section{Appendix: Order resolutions}\label{sec-order resolution}


One of the main tools for proving the statements in Theorem 1.1 is based on the algorithmically constructed projective resolutions in \cite{GS}, which we recall in this appendix under the name of \emph{order resolutions}. 
In \cite{AG} and \cite{GS} two methods for creating projective resolutions of modules were
presented.  Although seemingly different, they turn out to be the same resolution.  We
adopt the approach found in \cite{GS} which employs  
\grb bases.  We call these resolutions \emph{order resolutions} since
they are dependent on the chosen admissible order on the paths in $Q$.  We note that if $\Lambda$ is not monomial then the order resolution of a finitely generated $\Lambda$-module is not necessarily minimal. We will see however, that for a monomial algebra, the order resolution is the minimal projective resolution. 

We fix a quiver $Q$, an admissible order $\succ$ on $\cB$, a tip-reduced
subset $\cT$ of $\cB$, and a vertex $v\in Q_0$.
Let $\Lambda$ and $\Lambda'$ be two algebras in $\cV_{\cT}$.  The goal of this
section is to compare the order resolutions of the simple $\Lambda$-module $S_v$
to the order resolution of the simple $\Lambda'$-module $S'_v$, where $S_v$ and $S'_v$
are the one dimensional simple modules at $v$.  For this we compare the order resolutions
where $\Lambda'=\Lambda'_{Mon}=\Lambda_{Mon}$ with the last equality due to
the assumption that both
algebras are in $\cV_{\cT}$.

We begin by
recalling the general framework of  the algorithmic construction of order resolutions of modules over $\Lambda=KQ/I$.  Our specific goal is to show that the Betti numbers of the order resolutions of $S_v$ and $S_v'$ are the same.

Let $P$ be a finitely generated projective $KQ$-module. We fix a
direct sum decomposition of   $P = \bigoplus_{i \in \cI}v_i KQ$, where $v_i \in Q_0$ not necessarily distinct and $\cI$ is a  finite indexing set \cite{G}. We write elements of $P$ as  tuples.  Let $\cB^*$  be the 
set $$ \cB^* = \{p[i] \in P \mid p\in v_i\cB\}$$
 where $p[i]=(0,\dots,0,p,0,\dots,0)$ with
$p$ in the $i$-th component. Note that  $\cB^*$ is a $K$-basis of $P$.   We  extend the admissible  order to  $\cB^*$. For this we order the set $\cI$ 
and set $p[i] \succ q[j]$ if $p \succ q $ or $p=q$ and $i >j$.

\begin{defn}
1) Let $x = \sum_k \alpha_k b_k \in P$ where $\alpha_k \in K$ and $b_k \in \cB^*$. Set 
$$ \tip(x) = b_m \mbox{ where $\alpha_m \neq 0$ and if  $\alpha_l \neq 0$ and $l \neq m$,  then } b_m \succ b_l. $$

2) If $\{ 0 \} \neq X \subset P$ then 
$$ \tip (X) = \{ \tip (x) \mid x \in X \}.$$
\end{defn} 

3) We call an element $x \in P$ {\it right uniform}, if there exists a vertex $v \in Q_0$ such that $xv = x$.

4) We say  $ X \subset P$ is {\it tip-reduced} if for $x, x' \in X$, $x \neq x'$, and $\tip(x) = p[i]$ and  $\tip(x')=p'[i]$, then $p$ is not a prefix of $p'$.  

If an element $x \in P$ is right uniform then $xKQ \simeq \mathfrak{t} (x) KQ$ where
$ \mathfrak{t} (x)$ is the vertex $v \in KQ$ such that $xv =x$.

\begin{proposition}\cite{G}
Let $X$ be a tip-reduced right uniform generating set of  a  $KQ$-submodule $L$ of a projective $KQ$-module $P$. Then $$L = \bigoplus_{x \in X} x KQ.$$
\end{proposition}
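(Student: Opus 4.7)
The plan is to verify the two ingredients of the claimed equality separately: the inclusion $\sum_{x \in X} xKQ \subseteq L$ holds because each $x \in X \subseteq L$, and the reverse inclusion holds because $X$ generates $L$; so the only real content is to show that the sum $\sum_{x \in X} xKQ$ is \emph{direct}. (Since each $x$ is right uniform, $xKQ \simeq \mathfrak{t}(x)KQ$, so we may without loss write the typical element of $xKQ$ as $xr$ with $r \in \mathfrak{t}(x)KQ$.)

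Directness I would prove by contradiction: suppose there exist $r_x \in \mathfrak{t}(x)KQ$, almost all zero, with $\sum_{x \in X} x r_x = 0$ but with the set $S = \{x \in X \mid xr_x \neq 0\}$ nonempty. The key preliminary is a tip formula: for any right-uniform $x \in P$ and any $r \in \mathfrak{t}(x)KQ$ with $xr \neq 0$,
\[ \tip(xr) = \tip(x)\cdot \tip(r), \]
meaning that if $\tip(x) = p[i]$ and $\tip(r) = q$, then $\tip(xr) = (pq)[i]$, with coefficient equal to the product of the leading coefficients of $x$ and $r$. The verification uses right-uniformity (so that every basis summand of $x$ ends at the same vertex and hence multiplies $r$ compatibly) together with the two-sided multiplicative axioms of an admissible order and the component tie-breaker in the extended order on $\cB^*$.

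Armed with this, I would choose $x_0 \in S$ maximizing $\tip(x_0 r_{x_0})$ in the extended order on $\cB^*$, write $\tip(x_0) = p_0[i_0]$ and $\tip(r_{x_0}) = q_0$, so that the maximal tip is $(p_0 q_0)[i_0]$ with nonzero coefficient. For $\sum_{x}xr_x$ to vanish, this basis element of $P$ must be cancelled by contributions from some $x' \in S$ with $x' \neq x_0$. Identifying which summand $p'_k[i'_k]$ of $x'$ and which monomial in $r_{x'}$ produce $(p_0 q_0)[i_0]$, and then invoking maximality of $\tip(x_0r_{x_0})$ together with the tip formula applied to $x'$, I expect to conclude $\tip(x'r_{x'}) = (p_0q_0)[i_0]$, hence $\tip(x') = p'[i_0]$ for some $p'$ with $p'\cdot \tip(r_{x'}) = p_0q_0$. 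Then $p_0$ and $p'$ are both prefixes of the single path $p_0 q_0$, so one of them is a prefix of the other, contradicting tip-reducedness of $X$ applied to the pair $x_0 \neq x'$ whose tips lie in the same component $i_0$.

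The main obstacle is the bookkeeping required to promote the tip formula from a single-basis-vector calculation to the case where $x$ and $r$ are general $K$-linear combinations; once admissibility and right-uniformity are used to pin down $\tip(xr) = \tip(x)\tip(r)$ with the expected leading coefficient, the rest of the argument is a standard leading-term comparison and the tip-reduced hypothesis does the remaining work.
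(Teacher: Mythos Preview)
Your argument is correct. The tip formula $\tip(xr)=\tip(x)\cdot\tip(r)$ for right-uniform $x$ and $r\in\mathfrak t(x)KQ$ follows exactly as you outline from the admissible-order axioms together with the path-first, index-second tiebreak on $\cB^*$, and once that is in hand your maximal-tip cancellation argument goes through: any $x'\ne x_0$ contributing to the basis element $(p_0q_0)[i_0]$ must have $\tip(x'r_{x'})=(p_0q_0)[i_0]$ by maximality, hence $\tip(x')=p'[i_0]$ with $p'q'=p_0q_0$, forcing $p'$ and $p_0$ to be comparable prefixes of a common path and contradicting tip-reducedness.

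As for comparison with the paper: the paper does not prove this proposition at all; it simply cites it from \cite{G}. Your write-up is therefore a self-contained proof where the paper offers none. The approach you take---a leading-term comparison leveraging the multiplicativity of tips and the prefix condition in tip-reducedness---is the standard one in the Gr\"obner-basis literature for free or projective modules, and is essentially what one finds in the cited source.
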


 Next, we provide the general set-up for the construction of an order resolution
as found in \cite{GS}.

Let $M$ be a $KQ/I$-module which is finitely presented as a $KQ$-module and let $$0 \to L \to P \to M \to 0$$ be a finite $KQ$-presentation of $M$. Then we have the following commutative exact diagram. 

$$
\xymatrix{
&0\ar[d]&0\ar[d]\\
0\ar[r]&PI\ar[r]^{=}\ar[d]&PI\ar[d]\ar[r]&0\ar[d]\\
0\ar[r]&L\ar[r]\ar[d]&P\ar[d]\ar[r]&M\ar[d]^{=}\ar[r]&0\\
0\ar[r]&\Omega_{\Lambda}(M)\ar[r]\ar[d]&P/PI\ar[r]\ar[d]&M\ar[r]\ar[d]&0\\
&0&0&0}
$$

The construction of the projective resolution of $M$ is based on the construction of a sequence of sets, which we now introduce. 
Let $\cF^0 = \{ v_i[i] \in P \mid  i \in \cI  \}$.   
By \cite{G}, there is a right uniform  tip-reduced generating set $\widehat{\cF}^1$ of $L$. Define  $\cF^1 = \{ f^1 \in \widehat{\cF}^1 \mid f^1 \notin 
PI \} $.  
 Note that $\cF^1 \subset \bigoplus_{f^0 \in \cF^0} f^0 KQ$. Next, assume we have constructed
$\cF^{n-1}$ and $\cF^{n-2}$.  From this data, one constructs the tip-reduced set $\cF^n$
such that if $f^n\in \cF^n$ then $f^n$ is right uniform and  $f^n\in \bigoplus_{f^{n-1}\in\cF^{n-1}} f^{n-1}KQ$. 

We note that if $f\in \cF^n$ then $\mathfrak{t}(f)=\mathfrak{t}(\tip(f))$  by uniformity.
We briefly recall some facts about the sets $\cF^n$.  The tip set of $\cF^n$
is determined by the tip sets of $\cF^{n-1}$, $\cF^{n-2}$, and 
 the reduced \grb basis of $I$.  The point is that
if the tip sets of $\cF^{n-1}$ and $\cF^{n-2}$ coincide for the order resolutions  
of a $\Lambda$-module $M$ and $\Lambda'$-module $M$, then the
tip set of the set $\cF^{n}$ for the two modules will coincide \cite{GS}.

The $n$-th projective $\cP^n$ in the $KQ/I$-order resolution of $M$ is given by 
$$ \cP^n = \bigoplus_{f \in \cF^n} \mathfrak{t} (f) KQ/I.$$

\begin{proposition}\label{really awful} Suppose $S$, respectively $S'$, is a simple one dimensional 
$KQ/I$-module, respectively $KQ/I_{Mon}$-module, associated to a vertex $v\in Q$.
Let $\cP$ be an order resolution of $S$ with generating sets $\cF^n$ and $\cP'$ be an order resolution of $S'$  with generating sets $(\cF')^{n}$. Then for all $n$,  $\tip(\cF^n) = \tip((\cF')^n)$. 
\end{proposition}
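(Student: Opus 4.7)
My plan is to proceed by induction on $n$, leveraging the observation (attributed to \cite{GS} and highlighted in the paragraph just before the statement) that $\tip(\cF^n)$ is determined solely by $\tip(\cF^{n-1})$, $\tip(\cF^{n-2})$, and the tip set of the reduced \grb basis of the defining ideal. For the base case $n=0$, I will simply note that both $\cF^0$ and $(\cF')^0$ equal $\{v[0]\}$: the simple modules $S$ and $S'$ are both one-dimensional at $v$, so their $KQ$-level projective cover is $vKQ$ in each case, and the algorithm initialises with the idempotent $v[0]$.

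The decisive link to our setup is this: since both $\Lambda = KQ/I$ and $\Lambda_{Mon} = KQ/I_{Mon}$ lie in $\cV_{\cT}$, the reduced \grb basis of $I$ is $\cG = \{g_t \mid t \in \cT\}$ with $\tip(g_t) = t$, while by Proposition~\ref{prop-redgb}(1) the reduced \grb basis of $I_{Mon}$ is $\cT$ itself. Hence both reduced \grb bases have exactly the same tip set $\cT$, so the ``\grb input'' fed into the \cite{GS} construction has identical tip data for the two algebras.

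For the inductive step, assuming $\tip(\cF^k) = \tip((\cF')^k)$ for all $k < n$, the quoted dependency fact immediately yields $\tip(\cF^n) = \tip((\cF')^n)$. The transitional case $n=1$ fits the same pattern, since only $\cF^0$ and the tip set of the reduced \grb basis enter the recursion at that stage.

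The main obstacle is simply the justification of the dependency assertion itself: one must confirm that the algorithm in \cite{GS} producing a tip-reduced right uniform generating set $\cF^n$ from $\cF^{n-1}$, $\cF^{n-2}$ and the reduced \grb basis has an output whose tip set is a function only of the corresponding input tip sets. This is a bookkeeping argument about which leading monomials survive tip-reduction and the overlap-type combinatorics used to build $\cF^n$; it is the one step that must be quoted verbatim from \cite{GS} rather than extracted from the \grb material developed earlier in the present paper. Once it is granted, the induction is essentially formal.
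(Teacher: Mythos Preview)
Your proposal is correct and follows essentially the same approach as the paper: both arguments establish the base cases and then invoke the \cite{GS} fact that $\tip(\cF^n)$ depends only on $\tip(\cF^{n-1})$, $\tip(\cF^{n-2})$, and the tip set of the reduced \grb basis, combined with $\tip(\cG)=\tip(\cG')=\cT$. The only minor difference is that the paper handles the base cases $n=0,1$ by explicitly writing down the identical $KQ$-presentations of $S$ and $S'$, whereas you treat $n=0$ directly and fold $n=1$ into the recursive pattern.
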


\begin{proof}
$S$ and $S'$ have the same $KQ$-presentation, namely 
$$ 0 \to \bigoplus_{\mathfrak{o} (a) = v} a KQ \to vKQ \to S \to 0$$ 
and 
$$ 0 \to \bigoplus_{\mathfrak{o} (a) = v} a KQ \to vKQ \to S' \to 0.$$ 
Furthermore,   if $\cG$ and $\cG'$ are the \grb bases for $I$ and $I_{Mon}$ respectively   then by definition $ \tip(\cG) = \tip(\cG')$.   Thus,  $\tip(\cF^0) = \tip((\cF')^0)$,  $\tip(\cF^1) = \tip((\cF')^1)$. By our earlier discussion, we conclude  $\tip(\cF^2) = \tip((\cF')^2)$.  Continuing, we see
that  $\tip(\cF^n) = \tip((\cF')^n)$, for $n\ge 0$.
 
\end{proof}

Recall that  the {\it $n$th-Betti number of a projective resolution  $\cP$ of a  $\Lambda$-module $M$} is the sequence $(a_1, \ldots, a_{\vert Q_0 \vert})$ where $a_i$ is the number of  $f \in  \cF^n$ such that  $ \mathfrak{t} (f) = v_i$. 
 
The proof of the following Theorem follows directly from Proposition~\ref{really awful}

\begin{theorem}
Let $S$  be a simple $KQ/I$-module and $S'$ a simple $KQ/I_{Mon}$-module, both corresponding to the same vertex $v$ in $Q_0$. Let $\cP$ and $\cP'$ be the order resolutions of $S$ and $S'$ respectively. Then the Betti numbers of $\cP$ and $\cP'$ coincide. 
\end{theorem}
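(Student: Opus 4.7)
The plan is to deduce the statement as a direct counting consequence of Proposition~\ref{really awful}, using only that every $f \in \cF^n$ is right uniform and that $\cF^n$ is tip-reduced. The Betti tuple in degree $n$ is, by definition, $(a_1,\ldots,a_{|Q_0|})$ where $a_i = |\{f \in \cF^n \mid \mathfrak{t}(f) = v_i\}|$, so it suffices to compare these cardinalities for the two resolutions.

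First I would observe that the terminus of an element of $\cF^n$ is recoverable from its tip. Indeed, each $f \in \cF^n$ is right uniform, and the paper records just before Proposition~\ref{really awful} that in this case $\mathfrak{t}(f) = \mathfrak{t}(\tip(f))$. So if we decompose $\tip(\cF^n)$ as a disjoint union over vertices according to the terminus of the path component, the resulting counts match those in $\cF^n$, provided that the map $f \mapsto \tip(f)$ is injective on $\cF^n$.

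Next I would verify that injectivity by invoking the tip-reduced condition. If $f \ne f'$ in $\cF^n$ had $\tip(f) = \tip(f') = p[i]$ (necessarily in the same component $i$), then taking $p' = p$ one has $p$ trivially a prefix of $p'$, contradicting the definition of tip-reduced. Hence $f \mapsto \tip(f)$ is a bijection from $\cF^n$ onto $\tip(\cF^n)$ that preserves the terminus vertex, and similarly for $(\cF')^n$. Consequently,
\[
a_i(\cP^n) \;=\; |\{b \in \tip(\cF^n) \mid \mathfrak{t}(b) = v_i\}|, \qquad a_i({\cP'}^n) \;=\; |\{b \in \tip((\cF')^n) \mid \mathfrak{t}(b) = v_i\}|.
\]

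Finally, Proposition~\ref{really awful} gives $\tip(\cF^n) = \tip((\cF')^n)$ for every $n \ge 0$, so the two right-hand sides above agree for every $i$ and every $n$. Thus the Betti tuples in every homological degree coincide, which is the claim. The only potential obstacle is the bookkeeping in the injectivity step, and this is immediate from the precise wording of the tip-reduced condition; there is no deeper content beyond Proposition~\ref{really awful}.
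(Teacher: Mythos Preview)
Your proposal is correct and takes essentially the same approach as the paper: the paper simply asserts that the theorem ``follows directly from Proposition~\ref{really awful}'', and you have supplied exactly the bookkeeping (right uniformity gives $\mathfrak{t}(f)=\mathfrak{t}(\tip(f))$, tip-reducedness makes $f\mapsto\tip(f)$ injective) that justifies that one-line claim.
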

 
 \begin{corollary}\label{cor-Betti numbers}
 Let $KQ/I$ and $KQ/I'$ be such that $I_{Mon} = I'_{Mon}$. Let $S$  be a simple $KQ/I$-module and $S'$ a simple $KQ/I'$-module, both corresponding to the same vertex $v$ in $Q_0$. Let $\cP$ and $\cP'$ be the order resolutions of $S$ and $S'$ respectively. Then the Betti numbers of $\cP$ and $\cP'$ coincide. 
 \end{corollary}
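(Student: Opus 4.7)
The plan is to deduce this corollary directly from the preceding theorem by a transitivity argument. The preceding theorem tells us that for any ideal $J$ in $KQ$, the Betti numbers of the order resolution of the simple module at a vertex $v$ over $KQ/J$ agree with those of the order resolution of the simple module at $v$ over the associated monomial algebra $KQ/J_{Mon}$. This is the only input we need.

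First, I would fix the common vertex $v \in Q_0$ and let $S_{Mon}$ denote the simple module at $v$ over the algebra $KQ/I_{Mon} = KQ/I'_{Mon}$, with order resolution $\cP_{Mon}$. Applying the preceding theorem to the pair $(KQ/I, KQ/I_{Mon})$ and the simple modules $S$ and $S_{Mon}$ shows that the Betti numbers of $\cP$ equal those of $\cP_{Mon}$. Applying the same theorem to the pair $(KQ/I', KQ/I'_{Mon})$ and the simple modules $S'$ and $S_{Mon}$ (using the hypothesis $I_{Mon}=I'_{Mon}$ to identify the target monomial algebra and hence the resolution) shows that the Betti numbers of $\cP'$ equal those of $\cP_{Mon}$ as well. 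Composing these two equalities gives the desired identification of Betti numbers of $\cP$ and $\cP'$.

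There is essentially no obstacle here beyond making sure that the order resolution of a simple module at $v$ over $KQ/I_{Mon}$ is well-defined and independent of which algebra ($KQ/I$ or $KQ/I'$) we view it as the associated monomial algebra of. This is immediate because the order resolution depends only on the admissible order $\succ$, the quiver $Q$, the ideal (in this case $I_{Mon} = I'_{Mon}$), and the chosen vertex $v$, all of which are the same for both applications of the theorem. Thus the argument is a one-line chain of equalities, and the corollary follows.
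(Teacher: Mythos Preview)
Your argument is correct and is exactly the intended one: the paper gives no explicit proof of this corollary, and it is clearly meant to follow from the preceding theorem by the transitivity you describe, comparing each of $\cP$ and $\cP'$ to the order resolution over the common monomial algebra $KQ/I_{Mon}=KQ/I'_{Mon}$.
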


For a monomial algebra $KQ/I_{Mon}$ it is easy to see, using \cite{GHZ}, that  the order resolution of the simple $KQ/I_{Mon}$-modules is minimal in the sense that the image of the differential from the $n$th-projective
module to the $n-1$st projective module  $P^{n-1}$ is contained in $P^{n-1}J$, where $J$ is the ideal in $KQ$
generated by the arrows of $KQ$.

\bibliographystyle{plain}

 \end{document}